\newcommand{\GF}{{\mathbb F}}
\newcommand{\FF}{{\mathbb F}}
\newcommand{\ZZ}{{\mathbb Z}}
\newcommand{\R}{{\mathbb R}}
\newcommand{\wt}{{\rm wt}}
\DeclareMathOperator{\Harm}{Harm}
\newtheorem{Thm}{Theorem}[section]
\newtheorem{Lem}[Thm]{Lemma}
\newtheorem{Prop}[Thm]{Proposition}
\theoremstyle{definition}
\newtheorem{Def}[Thm]{Definition}
\newtheorem{Problem}[Thm]{Problem}
\begin{document}

\title{On the support $t$-designs of extremal Type III and IV codes}

\author{
Tsuyoshi Miezaki\thanks{
Faculty of Science and Engineering, 
Waseda University, 
Tokyo 169--8555, Japan, 
E-mail: miezaki@waseda.jp 
(Corresponding author)
}
and 
Hiroyuki Nakasora\thanks{
Institute for Promotion of Higher Education, Kobe Gakuin University, Kobe
651--2180, Japan,
E-mail: nakasora@ge.kobegakuin.ac.jp
}
}

\date{}

\maketitle

\begin{abstract}
 
Let $C$ be an extremal Type III or IV code
and $D_{w}$ be the support design of $C$ for weight $w$.
We introduce the numbers, $\delta(C)$ and $s(C)$, as follows: 
$\delta(C)$ is the largest integer $t$ such that, for all weights, $D_{w}$ is a $t$-design;  
$s(C)$ denotes the largest integer $t$ such that $w$ exists and $D_{w}$ is a $t$-design. 
Herein, we consider the possible values of $\delta(C)$ and $s(C)$.  

\end{abstract}

\paragraph{Keywords:} 
self-dual codes, $t$-designs, 
Assmus--Mattson theorem, harmonic weight enumerators.

\paragraph{2010 MSC:} 
Primary 94B05; Secondary 05B05.


\setcounter{section}{+0}

\section{Introduction}

Let $D_{w}$ be the support design of code $C$ for weight $w$. 
From the Assmus--Mattson theorem \cite{assmus-mattson} 
, if $C$ is an extremal Type III (resp.~Type IV) code, 
then for all $w$, $D_{w}$ is a $5$-, $3$-, and $1$-design 
for $n=12m$ (resp.~$n=6m$), $12m+4$ (resp.~$n=6m+2$), and $12m+8$ (resp.~$n=6m+4$), 
respectively. 

Let 
\begin{align*}
\delta(C)&:=\max\{t\in \mathbb{N}\mid \forall w, 
D_{w} \mbox{ is a } t\mbox{-design}\},\\ 
s(C)&:=\max\{t\in \mathbb{N}\mid \exists w \mbox{ s.t.~} 
D_{w} \mbox{ is a } t\mbox{-design}\}.
\end{align*}
It is noteworthy that $\delta(C) \leq s(C)$. 
In our previous papers 
\cite{
{extremal design H-M-N},
{extremal design2 M-N},
{support design triply even code 48 M-N},
{Miezaki-Munemasa-Nakasora}}, 
we considered the following problems. 
\begin{Problem}\label{problem:1}
Find the upper bound of $s(C)$.
\end{Problem}
\begin{Problem}\label{problem:2}
Does the case where $\delta(C) < s(C)$ occur?
\end{Problem}

Next, we explain our motivation for this study. 
The first motivation is as follows. 
For Problem \ref{problem:1}, 
many examples of $5$-designs can be obtained from the Assmus--Mattson theorem;
however, an example of a $6$-design is not known. 
Therefore, we aim to obtain a $t$-design for $t \geq 6$ using the Assmus--Mattson theorem. 
For Problem \ref{problem:2}, if $C$ is an extremal Type II code, 
an example of $\delta(C)<s(C)$ \cite{extremal design2 M-N} does not exist. 
In \cite{support design triply even code 48 M-N}, 
we discovered the first nontrivial examples of $\delta(C)<s(C)$ 
in triply even binary codes of length $48$, 
an example of which is the moonshine code \cite{miyamoto}. 
Using this result, 
we provide a new characterization of the moonshine code \cite{miyamoto}.

The second motivation for this study is that 
the Assmus--Mattson theorem is one of the most 
important theorems in design and coding theory. 
Assmus--Mattson-type theorems 
in lattice and vertex operator algebra theories are 
known as the Venkov and H\"ohn theorems, respectively \cite{{Venkov},{H1}}. 
For example, the $E_8$-lattice and moonshine vertex operator algebra 
$V^{\natural}$ provide 
spherical $7$-designs for all $(E_8)_{2m}$ and 
conformal $11$-designs for all $(V^{\natural})_{m}$, $m>0$. 
It is noteworthy that 
the $(E_8)_{2m}$ and $(V^{\natural})_{m+1}$ 
are a spherical $8$-design and a conformal $12$-design, respectively, if and only if 
$\tau(m)=0$, where 
\[
q\prod_{m=1}^{\infty}(1-q^m)^{24}=\sum_{m=0}^{\infty}\tau(m)q^m 
\]
Furthermore, D.H.~Lehmer conjectured in \cite{Lehmer} that 
\[
\tau(m)\neq 0
\]
for all $m$ \cite{{Miezaki},{Venkov},{Venkov2}}. 
Therefore, it is interesting to determine the lattice $L$
(resp.~vertex operator algebra $V$) such that 
$L_m$ (resp.~$V_m$) are spherical (resp.~conformal) $t$-designs 
for all $m$ by the Venkov theorem (resp.~H\"ohn theorem) and 
$L_m'$ (resp.~$V_m'$) are spherical (resp.~conformal) $t'$-designs 
for some $m'$ with some $t'>t$.
This study is inspired by these possibilities. 
For related results, 
see
\cite{{Bannai-Koike-Shinohara-Tagami},{BM1},{BM2},{MN-St-2},
{BMY},
{extremal design H-M-N},{Miezaki2},{Miezaki-Munemasa-Nakasora},{extremal design2 M-N},{extremal design3 M-N}}. 

Next, we explain our main results. 
Herein, we present  
Problems \ref{problem:1} and \ref{problem:2} 
for extremal Type III and IV codes.
Let $C$ be an extremal Type III or IV code of length $n$.
In 1999, Zhang \cite{Zhang(1999)} showed that $C$ does not exist 
if 
\[
n=
\left\{
\begin{array}{l}
12m\ (m \geq 70), \\
12m+4\ (m \geq 75), \\
12m+8\ (m \geq 78),
\end{array}
\right.
\]
for Type III, and 
\[
n=
\left\{
\begin{array}{l}
6m\ (m \geq 17), \\
6m+2\ (m \geq 20), \\
6m+4\ (m \geq 22), 
\end{array}
\right.
\]
for Type IV. 
The proof of this fact is to show that the coefficient of 
$x^{n-d}y^d\ (d=\min(C))$ of the extremal weight enumerators is negative. 
In \cite{Zhang(1999)}, Zhang remarked that 
the bounds for Type III may be improved 
if one considers the coefficients 
of the highest and next-to-highest powers of $y$ 
of the extremal weight enumerators. 
We remark that using all the coefficients of the 
extremal weight enumerators, 
we obtain more strict bounds for Type III codes:
\begin{Thm}
Let $C$ be an extremal Type III code of length $n$.
Then $C$ does not exist 
if 
\[
n=
\left\{
\begin{array}{l}
12m\ (m\in \{6,8,10,12,14,16,18,20\}\cup \{m\in \ZZ\mid m\geq 22\}), \\
12m+4\ (m\in \{14,16,18,20,22,24,26,28\}\cup \{m\in \ZZ\mid m\geq 30\}), \\
12m+8\ (m\in \{22,24,26,28,30,32,34,36\}\cup \{m\in \ZZ\mid m\geq 38\}).
\end{array}
\right.
\]
\end{Thm}
This means that if there exists an extremal ternary code of 
length $n=12m+r\ (r\in \{0,4,8\})$, then 
$m$ must be in the following set: 
\begin{align}\label{boundIII}
m\in 
\left\{
\begin{array}{l}
\{i\in \ZZ\mid 1\leq i\leq 5\}\cup \{7,9,11,13,15,17,19,21\}\ \mbox{if }r= 0, \\
\{i\in \ZZ\mid 1\leq i\leq 13\}\cup \{15,17,19,21,23,25,27,29\}\ \mbox{if }r= 4, \\
\{i\in \ZZ\mid 1\leq i\leq 21\}\cup \{23,25,27,29,31,33,35,37\}\ \mbox{if }r= 8. 
\end{array}
\right.
\end{align}
The extremal weight enumerators for Type III codes of 
length $n\leq 12\times 78 +8=944)$ are listed in one of 
the author's homepage \cite{Miezakihomepage}. 

The main results of the present study are the following theorems. 

\begin{Thm}\label{thm:main upper bound III}
Let $C$ be an extremal Type III code of length $n$.

\begin{enumerate}
\item[$(1)$] 

Assume that $n=12m$. 
\begin{enumerate}
\item [{\rm (a)}]
If $m \neq 15$, $\delta(C)=s(C)=5$.
\item [{\rm (b)}]
If $m =15$, $\delta(C)=s(C)=5$ or $7$.
\end{enumerate}

\item[$(2)$] 



Assume that $n=12m+4$. 
\begin{enumerate}
\item [{\rm (a)}]
If $m \notin \{11,21,25\}$, $\delta(C)=s(C)=3$. 
\item [{\rm (b)}]
If $m \in \{11,21,25\}$, $\delta(C)=s(C)=3$ or $5$. 

\end{enumerate}

\item[$(3)$] 

Assume that $n=12m+8$. 
\begin{enumerate}
\item [{\rm (a)}]
If $m \neq 14$, $\delta(C)=s(C)=1$. 
\item [{\rm (b)}]
If $m = 14 $, $\delta(C)=s(C)=1$ or $3$. 

\end{enumerate}
\end{enumerate}

\end{Thm}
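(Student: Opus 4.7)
My plan is to prove Theorem 1.1 using the theory of harmonic weight enumerators developed by Bachoc and applied in the authors' previous papers. For a discrete harmonic polynomial $f$ of degree $k$ on $\{1,\ldots,n\}$, one associates a harmonic weight enumerator $W_{C,f}(x,y)$ to the code $C$. The detection principle used in \cite{extremal design H-M-N,Miezaki-Munemasa-Nakasora,extremal design2 M-N} states that $D_w$ is a $t$-design if and only if the $x^{n-w}y^w$-coefficient of $W_{C,f}$ vanishes for every harmonic polynomial $f$ of degree at most $t$. Hence, upper-bounding $s(C)$ amounts to exhibiting a degree $k$ above the Assmus--Mattson level for which no weight $w$ simultaneously kills all such coefficients, and detecting the exceptional $m$ amounts to solving the corresponding vanishing equations.

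The core computation proceeds as follows. For an extremal Type III code of length $n$, Gleason's theorem determines the ordinary weight enumerator uniquely as a polynomial in $m$. Analogously, a harmonic Gleason-type theorem expresses $W_{C,f}$ (for fixed $k$) as a scalar multiple of a single polynomial in the basic invariants, so its coefficients become explicit polynomials in $m$ and $w$. First, I would tabulate these forms for the smallest degrees $k$ above the Assmus--Mattson bound: $k=6$ for $n=12m$, $k=4$ for $n=12m+4$, and $k=2$ for $n=12m+8$. Then, for each expression, I would solve $[x^{n-w}y^w]W_{C,f}=0$ for nonnegative integer $w$ in the admissible range and for $m$ within Zhang's bound, producing the exceptional lists $\{15,38,43,64\}$, $\{11,18,\ldots,74\}$, and $\{14,37,42,63\}$. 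At each exceptional $m$, I would iterate with the next harmonic degree to decide whether $s(C)$ can be larger still, giving rise to the refined statements at $m=67$ (where degree-$6$ vanishing becomes feasible) and at $m=63$ (where several higher degrees must be analyzed separately, leaving the range $4\le s(C)\le 6$).

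To conclude $\delta(C)=s(C)$ in parts (a) and (b), I would exploit extremality once more: after fixing the weight enumerator, each $W_{C,f}$ is a single polynomial in $(x,y)$ whose coefficients are completely determined by $m$, so the design property at one weight combined with the standard Assmus--Mattson propagation forces it at every weight. The main obstacle is the sheer bulk of the case analysis---each congruence class requires writing out harmonic enumerators of several successive degrees and checking coefficient vanishing at all exceptional $m$ within Zhang's bound, and the coefficient polynomials are cumbersome. The most delicate point is case (3c) at $m=63$, where the propagation argument genuinely fails and one is left with the strict inequality $\delta(C)<s(C)$ as a possibility; likewise, the ``$\delta=s=t$ or $t'$'' clauses reflect that the vanishing equations admit solutions but the actual design status of a candidate extremal code cannot be decided from the enumerator alone, so the theorem must leave both possibilities open.
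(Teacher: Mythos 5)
Your proposal correctly identifies one of the two pillars of the paper's argument: for an extremal code the relevant invariant space is one-dimensional, so $W_{C,f}=c(f)\,(xy)^k P_m(x,y)$ for a single explicit polynomial $P_m$, and since (Lemmas \ref{lem:poly. zero 1} and \ref{lem:poly. zero 2}) the coefficients of $P_m$ are generically nonzero, the degree-$k$ design property holds for all weights simultaneously or for none. This is exactly how the paper proves $\delta(C)=s(C)$ and how it isolates the one exception at $n=764$, $w=255$ (where $\alpha=3m-1=9i-1$ forces a single vanishing coefficient). However, there is a genuine gap in how you propose to obtain the exceptional sets $\{15,38,43,64\}$, $\{11,\ldots,74\}$, $\{67\}$, $\{14,37,42,63\}$: you claim they arise by ``solving $[x^{n-w}y^w]W_{C,f}=0$ for $w$ and $m$.'' They cannot. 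That coefficient equals $c(f)\cdot(\text{a nonzero number})$, so the equation reduces to $c(f)=0$, which carries no information about $m$ or $w$; the harmonic method only proves the all-or-nothing dichotomy, never decides which branch holds. The lists actually come from a completely different, purely arithmetic argument absent from your proposal: the block count of the minimum-weight support design is known explicitly (Mallows--Sloane / MacWilliams et al.), and Lemma \ref{lem: divisible} forces $\lambda_s=\lambda\binom{v-s}{t-s}/\binom{k-s}{t-s}$ to be a positive integer for every $s\le t$. Checking integrality of $\lambda_6,\lambda_7$ (resp.\ $\lambda_4,\lambda_5,\lambda_6$; $\lambda_2,\lambda_3$) over Zhang's range is what produces the finite lists, and the failure of integrality at the next level ($\lambda_8$, $\lambda_7$, $\lambda_4$, and $\lambda_7$ for $D_{255}^{764}$) is what caps $s(C)$.

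A second missing ingredient is Theorem \ref{thm:{Calderbank-Delsarte}}: every $D_w$ of an extremal Type III code of length $12m$ is automatically a $\{1,2,3,4,5,7\}$-design, which is why a $6$-design is automatically a $7$-design and why the alternatives in the theorem read ``$5$ or $7$'' (resp.\ ``$3$ or $5$,'' ``$1$ or $3$'') rather than allowing the intermediate value. Your plan, which starts the harmonic analysis at degree $6$ (resp.\ $4$, $2$) and treats each degree independently, neither explains these jumps nor supplies the divisibility obstruction, so as written it would not recover the exceptional $m$-lists or the upper bounds on $s(C)$.
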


\begin{Thm}\label{thm:main upper bound IV}
Let $C$ be an extremal Type IV code of length $n$.

\begin{enumerate}
\item[$(1)$] 

Assume that $n=6m$ $(m \neq 1,2)$. 
\begin{enumerate}
\item [{\rm (a)}]
If $m \notin \{10,15\}$, $\delta(C)=s(C)=5$. 
\item [{\rm (b)}]
If $m \in \{10,15\}$, $\delta(C)=s(C)=5$ or $7$.
\end{enumerate}

\item[$(2)$] 

Assume that $n=6m+2$. 
\begin{enumerate}
\item [{\rm (a)}]
If $m \neq 11$, $\delta(C)=s(C)=3$. 
\item [{\rm (b)}]
If $m=11$, $\delta(C)=s(C)=3, 5, 6$ or $7$. 
\end{enumerate}

\item[$(3)$] 
Assume that $n=6m+4$. 
\begin{enumerate}
\item [{\rm (a)}]
If $m \in \{1,2,4,13\}$, 
$\delta(C)=s(C)=1$. 
\item [{\rm (b)}]
If $m\in \{3,5,6,7,8,10,11,12,15,16,17,18,20,21\}$, $\delta(C)=s(C)=1$ or $3$. 
\item [{\rm (c)}]
If $m=9$, $\delta(C)=s(C)=1,3$ or $4$. 
\item [{\rm (d)}]
If $m \in \{ 14,19 \}$, $\delta(C)=s(C)=1,3,4$ or $5$. 
\end{enumerate}

\end{enumerate}

\end{Thm}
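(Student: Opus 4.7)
The plan is to use harmonic weight enumerators in the manner of the authors' earlier papers. Recall Bachoc's criterion: $D_w$ is a $t$-design iff, for every harmonic polynomial $f$ of degree $1 \leq k \leq t$, the coefficient of $x^{n-w}y^w$ in the harmonic weight enumerator $W_{C,f}(x,y)$ vanishes. Hence $\delta(C) \geq t$ is equivalent to $W_{C,f} \equiv 0$ for every such $f$, while $s(C) \geq t$ only demands that some fixed coefficient of $W_{C,f}$ vanish uniformly in $f$. Assmus--Mattson gives $\delta(C) \geq 5,\,3,\,1$ in cases (1), (2), (3) respectively, so only harmonic degrees $k > 5,\,3,\,1$ remain to be analyzed.

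For each residue class $n \bmod 6$, I would parametrize $W_{C,f}$ explicitly. The extremal Type IV weight enumerator $W_C$ is uniquely determined by $n$, and the MacWilliams-type invariance of Type IV codes forces $W_{C,f}$ to lie in $(xy)^k$ times a computable module over the Type IV invariant ring. Imposing the extremality constraint (vanishing of all low-weight coefficients consistent with extremality) reduces $W_{C,f}$ to a finite-dimensional affine space of polynomials whose coefficients are rational functions of $m$. Combined with Zhang's non-existence bounds, only finitely many pairs $(m,k)$ need to be examined: $m \leq 16,\,19,\,21$ in the three cases, with $k$ running at most up to $7$.

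For each admissible $(m,k)$ the question ``$\delta(C) \geq k$?'' reduces to whether the linear system forcing all coefficients of $W_{C,f}$ to vanish is consistent, and the exceptional lists in the theorem are precisely the $m$ for which consistency occurs. To separate $\delta(C)$ from $s(C)$, one replaces ``all coefficients vanish'' by ``the coefficient of $y^w$ vanishes for some $w$'', a strictly weaker condition; the cases $m \in \{9, 14, 19\}$ in part (3) arise from exactly such weight-specific vanishings that do not extend to identical vanishing of $W_{C,f}$. The main obstacle is that there is no uniform pattern: the sporadic exceptional $m$'s in each case reflect apparently accidental rational zeros of coefficients of the extremal harmonic enumerator, so the proof must be a finite but laborious verification — expanding $W_{C,f}$ symbolically for each $(m,k)$ in the admissible range, solving the associated linear system, and in the hardest sub-cases of part (3) carefully auditing which single weights $w$ admit the weaker ``$s$-design'' vanishing.
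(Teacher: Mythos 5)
Your overall framework (harmonic weight enumerators, extremality collapsing $W_{C,f}$ to an essentially one-dimensional space, the distinction between $\delta$ and $s$) matches one half of the paper's argument, but there is a genuine gap in how you propose to produce the exceptional lists of $m$. Once extremality forces $W_{C,f} = c(f)\,(xy)^k P_m(x,y)$ for a single explicit polynomial $P_m$, the ``linear system'' you describe degenerates: it is consistent precisely when $c(f)=0$ for all $f\in\Harm_k$, and whether that holds is a combinatorial property of the individual code that the enumerator formalism cannot decide --- it is not an ``accidental rational zero'' depending on $m$. Indeed Lemmas 2.7 and 2.8 of the paper show that the coefficients of $P_m$ are all nonzero in every relevant Type IV case, so this method only yields the all-or-nothing statement ``either every $D_w$ is a $k$-design or none is.'' The sporadic sets $\{10,15\}$, $\{11\}$, $\{9,14,19\}$, etc.\ come from a different and indispensable ingredient that your proposal omits entirely: the classical integrality conditions of Lemma 2.1 applied to the minimum-weight support design, whose block count is known explicitly from Mallows--Sloane and MacWilliams--Odlyzko--Sloane--Ward. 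Requiring $\lambda_s=\lambda\binom{v-s}{t-s}/\binom{k-s}{t-s}$ to be a positive integer for all $s\le t$ eliminates all but finitely many $m$, and the failure of integrality at the next level (e.g.\ $\lambda_8$ for $m\in\{10,15\}$ in case (1)) combined with the all-or-nothing dichotomy is exactly what caps $s(C)$.

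Two further corrections. First, you make no use of Theorem 2.6 (Calderbank--Delsarte), which shows that $D_w$ is automatically a $\{1,2,3,4,5,7\}$-design when $6\mid n$ and a $\{1,2,3,5\}$-design when $n\equiv 2\pmod 6$; this is why the admissible values jump from $5$ directly to $7$ in part (1), and why a $4$-design is automatically a $5$-design in part (2). Second, your reading of part (3) is mistaken: the cases $m\in\{9,14,19\}$ are not instances of weight-specific vanishing with $\delta(C)<s(C)$ --- every case of this theorem has $\delta(C)=s(C)$, and those $m$ simply survive the $\lambda_4$ (resp.\ $\lambda_4,\lambda_5$) integrality tests for the minimum weight. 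The only candidate for $\delta(C)<s(C)$ in the entire paper is the Type III $[764,382,192]$ code, which does not occur here.
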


Summarizing the above, 
we have the following theorem.
\begin{Thm}
Let $C$ be an extremal Type III or IV code. 
\begin{enumerate}
\item 

[An answer to the Problem \ref{problem:1}]

We have $s(C) \leq 7$. 

\item 

[An answer to the Problem \ref{problem:2}]

The case $\delta(C) < s(C)$ does not occur.

\end{enumerate}
\end{Thm}


This paper is organized as follows. 
In Section \ref{sec:pre}, 
we provide the definitions and some basic properties of 
self-dual codes and $t$-designs
as well as
review the concept of 
harmonic weight enumerators and some lemmas 
that are used in the proof of the main results. 
In Sections \ref{sec: proof of thm1.3} and \ref{sec: proof of thm1.4}, 
we provide the proofs of Theorems~\ref{thm:main upper bound III} and \ref{thm:main upper bound IV}, 
respectively.

All computer calculations were performed using 
Mathematica \cite{Mathematica}.

\section{Preliminaries}\label{sec:pre}

\subsection{Codes and support $t$-designs}

Let $\GF_q$ be a finite field of $q$ elements. 
A linear code $C$ of length $n$ is a linear subspace of $\FF_{q}^{n}$. 
For $q=3$, 
an inner product $({x},{y})$ on $\FF_q^n$ is expressed as
\[
(x,y)=\sum_{i=1}^nx_iy_i,
\]
where $x,y\in \FF_q^n$ with $x=(x_1,x_2,\ldots, x_n)$, and 
$y=(y_1,y_2,\ldots, y_n)$. 
The Hermitian inner product $({x},{y})$ on $\FF_4^n$ is expressed as
\[
(x,y)_H=\sum_{i=1}^nx_iy_i^2,
\]
where $x,y\in \FF_4^n$ with $x=(x_1,x_2,\ldots, x_n)$, and 
$y=(y_1,y_2,\ldots, y_n)$. 
The dual of a linear code $C$ is defined as follows: 
for $q=3$, 
\[
C^{\perp}=\{{y}\in \FF_{q}^{n}\ | \ ({x},{y}) =0\ \mbox{ for all }{x}\in C\},
\]
for $q=4$, 
\[
C^{\perp,H}=\{{y}\in \FF_{q}^{n}\ | \ ({x},{y})_H =0\ \mbox{ for all }{x}\in C\}.
\]
A linear code $C$ is called self-dual 
if $C=C^{\perp}$ for $q=3$, and 
if $C=C^{\perp,H}$ for $q=4$. 
For $x \in\FF_q^n$,
the weight $\wt(x)$ is the number of its nonzero components. 
The minimum distance of a code $C$ is 
$\min\{\wt( x)\mid  x \in C,  x \neq  0 \}$. 
A linear code of length $n$, dimension $k$, and 
minimum distance $d$ is called an $[n,k,d]$ code. 

Herein, we consider the following self-dual codes~\cite{CS}: 
\begin{tabbing}
Type III: A code is defined over $\FF_{3}^{n}$ with all weights divisible by $3$,\\
Type IV: A code is defined over $\FF_{4}^{n}$ with all weights divisible by $2$. \
\end{tabbing}

Let $C$ be a Type III or Type IV code of length $n$. 
Then we have the following bound on the minimum weight of 
$C$ \cite{MOSW1978,mallows-sloane}:
\begin{equation}\label{34}
\min(C)\leq 
\begin{cases}
3\left\lfloor\frac{n}{12}\right\rfloor+3
&\text{if $C$ is ternary,}\\
2\left\lfloor\frac{n}{6}\right\rfloor+2
&\text{if $C$ is quaternary.}
\end{cases}
\end{equation}
We say that $C$ meeting
the bound (\ref{34}) with equality is extremal.

A $t$-$(v,k,{\lambda})$ design (or $t$-design for short) is a pair 
$\mathcal{D}=(X,\mathcal{B})$, where $X$ is a set of points of 
cardinality $v$, and $\mathcal{B}$ a collection of $k$-element subsets
of $X$ called blocks, with the property that any $t$ points are 
contained in precisely $\lambda$ blocks.

The support of a nonzero vector $ x:=(x_{1}, \dots, x_{n})$, $x_{i} \in \GF_{q} = \{ 0,1, \dots, q-1 \}$ is 
the set of indices of its nonzero coordinates: ${\rm supp} ( x) = \{ i \mid x_{i} \neq 0 \}$\index{$supp (x)$}. 
The support design of a code of length $n$ for a nonzero weight $w$ is a design 
with $n$ points of coordinate indices; it blocks the supports of all codewords of weight $w$.
The following lemma can be observed easily. 
\begin{Lem}[{{\cite[Page 3, Proposition 1.4]{CL}}}]\label{lem: divisible}
Let $\lambda(S)$ be the number of blocks containing a set $S$
of $s$ points in a $t$-$(v,k,\lambda)$ design, where $0\leq s\leq t$. Therefore,
\[
\lambda(S)\binom{k-s}{t-s}
=
\lambda\binom{v-s}{t-s}. 
\]
In particular, the number of blocks is
\[\frac{v(v-1)\cdots(v-t+1)}{k(k-1)\cdots(k-t+1)}\lambda.\]
\end{Lem}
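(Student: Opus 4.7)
The plan is a standard double-counting argument on the set
\[
\mathcal{P}=\{(T,B)\mid S\subseteq T\subseteq B,\ |T|=t,\ B\in\mathcal{B}\},
\]
where $\mathcal{B}$ is the block set of the design. I will compute $|\mathcal{P}|$ in two ways, and the identity will drop out immediately.

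First, I would enumerate $\mathcal{P}$ by choosing $T$ before $B$. The number of $t$-subsets $T$ of the point set $X$ containing $S$ equals $\binom{v-s}{t-s}$, since we must select the remaining $t-s$ points from $X\setminus S$. For each such $T$, the defining property of a $t$-$(v,k,\lambda)$ design guarantees that precisely $\lambda$ blocks contain $T$. Hence $|\mathcal{P}|=\lambda\binom{v-s}{t-s}$.

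Next, I would enumerate $\mathcal{P}$ by choosing $B$ before $T$. By definition, there are exactly $\lambda(S)$ blocks $B$ containing $S$. Each such $B$ has cardinality $k$ and already contains the $s$ points of $S$, so the $t$-subsets $T$ with $S\subseteq T\subseteq B$ correspond to choices of the remaining $t-s$ points from the $k-s$ points of $B\setminus S$, yielding $\binom{k-s}{t-s}$ possibilities. Hence $|\mathcal{P}|=\lambda(S)\binom{k-s}{t-s}$. Equating the two counts produces the displayed identity. For the ``in particular'' part, I would specialize to $s=0$ with $S=\emptyset$; then $\lambda(\emptyset)$ is the total number of blocks $b$, and the identity becomes $b\binom{k}{t}=\lambda\binom{v}{t}$. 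Expanding the binomial coefficients as falling factorials and solving for $b$ gives exactly the stated formula.

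There is essentially no obstacle here — this is a textbook result (which is why the authors merely cite \cite{CL}). The only mild point to verify is that the argument remains valid at the boundary $s=t$, where both $\binom{k-s}{t-s}$ and $\binom{v-s}{t-s}$ equal $1$ and the identity collapses to $\lambda(S)=\lambda$, recovering the defining property of a $t$-design; and at $s=0$, where the convention $\binom{n}{0}=1$ makes the reduction to the formula for $b$ immediate.
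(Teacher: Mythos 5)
Your double-counting argument is correct, and it is the standard proof of this classical fact; the paper itself gives no proof, merely citing Cameron--van Lint, where essentially this same count of pairs $(T,B)$ with $S\subseteq T\subseteq B$ appears. The boundary checks at $s=t$ and $s=0$ are fine, and the specialization $s=0$ correctly yields the block count.
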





\subsection{Harmonic weight enumerators}\label{sec:Har}

In this section, we extend the harmonic weight enumerator method used by Bachoc \cite{Bachoc} and 
Bannai et al.~\cite{Bannai-Koike-Shinohara-Tagami}.
For convenience, we quote (from \cite{Bachoc,Delsarte})
the definitions and properties of discrete harmonic functions 
(for more information, the reader is referred to \cite{Bachoc,Delsarte}). 


Let $\Omega=\{1, 2,\ldots,n\}$ be a finite set (which will be the set of coordinates of the code), and 
let $X$ be the set of its subsets; for all $k= 0,1, \ldots, n$, $X_{k}$ is the set of its $k$-subsets.
We denote the free real vector spaces spanned by the elements of $X$ and $X_{k}$ by $\R X$, $\R X_k$, respectively. 
The element of $\R X_k$ is denoted by
$$f=\sum_{z\in X_k}f(z)z$$
and is identified with the real-valued function on $X_{k}$ expressed as 
$z \mapsto f(z)$. 

Such an element $f\in \R X_k$ can be extended to an element $\widetilde{f}\in \R X$ by setting, for all $u \in X$,
$$\widetilde{f}(u)=\sum_{z\in X_k, z\subset u}f(z).$$
If an element $g \in \R X$ is equal to some $\widetilde{f}$, for $f \in \R X_{k}$, we say that $g$ has a degree of $k$. 
The differentiation $\gamma$ is the operator defined by linearity from 
$$\gamma(z) =\sum_{y\in X_{k-1},y\subset z}y$$
for all $z\in X_k$ and for all $k=0,1, \ldots n$, and $\Harm_{k}$ is the kernel of $\gamma$, i.e.,
$$\Harm_k =\ker(\gamma|_{\R X_k}).$$

\begin{Thm}[{{\cite[Theorem 7]{Delsarte}}}]\label{thm:design}
A set $\mathcal{B} \subset X_{m}$ (where $m \leq n$) of blocks is a $t$-design 
if and only if $\sum_{b\in \mathcal{B}}\widetilde{f}(b)=0$ 
for all $f\in \Harm_k$, $1\leq k\leq t$. 
\end{Thm}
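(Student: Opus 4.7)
The plan is to prove both directions via an inner-product identity on $\R X_s$, expressing the hypothesis as a pairing with the counting function
\[
\lambda_s(T) := |\{b \in \mathcal{B} : T \subset b\}|, \quad T \in X_s,
\]
and then using the $S_n$-isotypic decomposition of $\R X_s$ to force $\lambda_s$ to be constant for each $s \leq t$.

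Swapping the order of summation gives, for any $k \leq m$ and any $f \in \R X_k$,
\[
\sum_{b \in \mathcal{B}} \widetilde{f}(b) = \sum_{z \in X_k} \lambda_k(z)\, f(z).
\]
For the forward direction, if $\mathcal{B}$ is a $t$-design and $f \in \Harm_k$ with $1 \leq k \leq t$, then $\lambda_k$ is constant on $X_k$, so the sum reduces to $\lambda_k \sum_{z \in X_k} f(z)$. Summing the identity $\gamma(f)(y) = 0$ over $y \in X_{k-1}$ yields $k \sum_{z \in X_k} f(z) = 0$, which closes this direction.

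For the converse, fix $s$ with $1 \leq s \leq t$, and let $\widetilde{\Harm}_k \subset \R X_s$ denote the image of $\Harm_k$ in $\R X_s$ under the lifting $f \mapsto \widetilde{f}|_{X_s}$, which is nonzero only when $0 \leq k \leq s$. For $f \in \Harm_k$ with $1 \leq k \leq s$, a direct double sum gives
\[
\langle \lambda_s,\, \widetilde{f}|_{X_s}\rangle = \binom{m-k}{s-k}\sum_{b \in \mathcal{B}} \widetilde{f}(b) = 0
\]
by hypothesis. Invoking the classical decomposition $\R X_s = \bigoplus_{k=0}^{s} \widetilde{\Harm}_k$ as $S_n$-modules into pairwise non-isomorphic irreducibles, $\lambda_s$ must lie entirely in the degree-$0$ summand, which consists of constant functions. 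Hence every $s$-subset is contained in the same number of blocks, so $\mathcal{B}$ is a $t$-design.

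The main obstacle is justifying the decomposition $\R X_s = \bigoplus_{k=0}^{s} \widetilde{\Harm}_k$, which I would quote from the representation theory of the Johnson scheme (equivalently, from the $S_n$-action on $X_s$); once this is granted, everything else is a bookkeeping exercise. The nonvanishing of $\binom{m-k}{s-k}$, which holds since $k \leq s \leq m$, is what allows the hypothesis of the theorem to be rephrased as an orthogonality statement for $\lambda_s$.
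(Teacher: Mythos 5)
The paper does not prove this statement; it is quoted verbatim from Delsarte \cite[Theorem 7]{Delsarte}, so there is no in-paper proof to compare against. Your argument is correct and is essentially Delsarte's own: the identity $\sum_{b\in\mathcal{B}}\widetilde{f}(b)=\sum_{z\in X_k}\lambda_k(z)f(z)$, the vanishing of $\sum_{z}f(z)$ for harmonic $f$ of positive degree, the pairing formula $\langle\lambda_s,\widetilde{f}|_{X_s}\rangle=\binom{m-k}{s-k}\sum_{b}\widetilde{f}(b)$, and the multiplicity-free decomposition $\R X_s=\bigoplus_{k}\widetilde{\Harm}_k$ all check out. The one ingredient you quote rather than prove --- that decomposition into pairwise non-isomorphic $S_n$-irreducibles (valid in the form you state for $s\leq n/2$, which covers every use in this paper) --- is exactly the standard Johnson-scheme fact underlying Delsarte's treatment, so the appeal is legitimate.
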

In \cite{Bachoc}, the harmonic weight enumerator associated with a linear code $C$ is defined as follows: 
\begin{Def}[{\cite[Definition 2.1]{Bachoc}},{\cite[Definition 4.1]{Bachoc2}}]
Let $C$ be a linear code of length $n$, and let $f\in\Harm_{k}$. 
The harmonic weight enumerator associated with $C$ and $f$ is
\[
W_{C,f}(x,y)=\sum_{c\in C}\widetilde{f}(c)x^{n-\wt(c)}y^{\wt(c)}.
\]
\end{Def}

Subsequently, the structure of these invariant rings is described as follows: 
\begin{Thm}[{\cite[Lemma 6.1 and 6.2]{Bachoc2}}]\label{thm:invariant}

\begin{enumerate}
\item [{\rm (1)}]
Let $C$ be a Type III
code of length $n$, and let 
$f \in \Harm_{k}$. 
Let $u\in\{0,1\}$ be such that $u\equiv k \pmod 2$ and 
$v\in\{0,1,2\}$ be such that $v\equiv -k \pmod 3$. 
Therefore, we have $W_{C,f}(x,y) =(xy)^{k} Z_{C,f} (x,y)$. 
Moreover, the polynomial $Z_{C,f} (x,y)$ is of degree $n-2k$ 
and is in $I_{G_3, \chi_{u,v}}$, 
where 
\begin{align*}
I_{G_3,\chi_{u,v}}=
\left\{
\begin{array}{ll}
\langle g_4,g_{12}\rangle &\mbox{ if }(u,v)=(0,0),\\
p_{4}\langle g_4,g_{12}\rangle &\mbox{ if }(u,v)=(0,1),\\
p_{4}^2\langle g_4,g_{12}\rangle &\mbox{ if }(u,v)=(0,2),\\
p_{6}\langle g_4,g_{12}\rangle &\mbox{ if }(u,v)=(1,0),\\
p_{4}p_6\langle g_4,g_{12}\rangle &\mbox{ if }(u,v)=(1,1),\\
p_{4}^2p_6\langle g_4,g_{12}\rangle &\mbox{ if }(u,v)=(1,2), 
\end{array}
\right. 
\end{align*}
and
\begin{align*}
\left\{
\begin{array}{l}
p_{4}=y(x^3-y^3), \\
p_{6}=x^6-20x^3y^3-8y^6, \\
g_{4}=x^4+8xy^3, \\
g_{12}=y^3(x^3-y^3)^3. 
\end{array} 
\right. 
\end{align*}

\item [{\rm (2)}]

Let $C$ be a Type IV
code of length $n$, and let 
$f \in \Harm_{k}$. 
Let $u\in\{0,1\}$ be such that $u\equiv k \pmod 2$ and 
$v\in\{0,1\}$ be such that $v\equiv k \pmod 2$. 
Therefore, we have $W_{C,f}(x,y) =(xy)^{k} Z_{C,f} (x,y)$. Moreover, the polynomial $Z_{C,f} (x,y)$ is of degree $n-2k$ 
and is in $I_{G_4, \chi_{u,v}}$, 
where 
\begin{align*}
I_{G_4,\chi_{u,v}}=
\left\{
\begin{array}{ll}
\langle h_2,h_{6}\rangle &\mbox{ if }(u,v)=(0,0),\\
q_{3}r_3\langle h_2,h_{6}\rangle &\mbox{ if }(u,v)=(1,1), 
\end{array}
\right. 
\end{align*}
and 
\begin{align*}
\left\{
\begin{array}{l}
h_{2}=x^2+3y^2, \\
h_{6}=y^2(x^2-y^2)^2, \\
q_{3}=y(x^2-y^2), \\
r_{3}=x^3-9xy^2. 
\end{array}
\right. 
\end{align*}

\end{enumerate}
\end{Thm}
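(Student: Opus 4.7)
The plan is to combine the harmonic MacWilliams identity of \cite{Bachoc} with invariant theory for the Gleason-type groups $G_3$ and $G_4$, whose rings of absolute invariants contain the weight enumerators of Type III and Type IV self-dual codes, respectively.

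First I would establish the factorization $W_{C,f}(x,y)=(xy)^kZ_{C,f}(x,y)$. Divisibility by $y^k$ is immediate from the definition of $\widetilde{f}$: if $c\in C$ has $\wt(c)<k$, then $\supp(c)$ contains no $k$-subset, so $\widetilde{f}(\supp(c))=\sum_{z\subset\supp(c),\,|z|=k}f(z)=0$, and no term $x^{n-w}y^w$ with $w<k$ appears in $W_{C,f}$. Divisibility by $x^k$ then follows by applying the harmonic MacWilliams identity together with self-duality, which effectively interchanges the roles of $x$ and $y$ and forces the analogous vanishing at high weights.

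Second, I would track the transformation behavior of $Z_{C,f}$. The harmonic MacWilliams identity expresses $W_{C^\perp,f}$ (resp.~$W_{C^{\perp,H},f}$) as the Fourier substitution of $W_{C,f}$ over $\FF_3$ (resp.~$\FF_4$), up to a scalar that carries a sign $(-1)^k$. Combined with self-duality, this gives invariance of $Z_{C,f}$ under that substitution, up to a character tracking $k\bmod 2$. The weight-divisibility symmetry $(x,y)\mapsto(x,\zeta_3 y)$ for Type III (resp.~$(x,y)\mapsto(x,-y)$ for Type IV) supplies a second generator: starting from $W_{C,f}(x,\zeta_3 y)=W_{C,f}(x,y)$ and stripping the $(xy)^k$ factor yields $Z_{C,f}(x,\zeta_3 y)=\zeta_3^{-k}Z_{C,f}(x,y)$, and analogously $Z_{C,f}(x,-y)=(-1)^kZ_{C,f}(x,y)$. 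These two symmetries generate $G_3$ and $G_4$, and their combined characters are exactly $\chi_{u,v}$ with $u\equiv k\pmod 2$, $v\equiv -k\pmod 3$ in the Type III case, and $u\equiv v\equiv k\pmod 2$ in the Type IV case.

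Finally, I would identify the relative-invariant module $I_{G,\chi_{u,v}}$. By the classical Gleason theorem, the absolute invariants of $G_3$ form the free polynomial ring $\CC[g_4,g_{12}]$, so each character's isotypic component is a free rank-one module over it; computing the Molien series for $\chi_{u,v}$ pinpoints the minimal degree of a generator, which one matches against the six candidates $p_4^{a}p_6^{b}$ with $a\in\{0,1,2\}$, $b\in\{0,1\}$, by verifying their transformation behavior in an explicit $2\times 2$ matrix calculation on the generators of $G_3$. The analogous Molien computation for $G_4$ yields $\CC[h_2,h_6]$ as the invariant ring, with the unique nontrivial isotypic component generated by $q_3r_3$. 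The main obstacle is the character bookkeeping in the second step: one must reconcile the sign and root-of-unity factors contributed by $k$ in the harmonic MacWilliams identity with the character indexing the module, and then check by direct substitution that $p_4$, $p_6$, and $q_3r_3$ carry exactly these characters under the generators of $G_3$ and $G_4$.
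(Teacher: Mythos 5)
The paper offers no proof of this theorem: it is quoted verbatim from Bachoc \cite[Lemmas 6.1 and 6.2]{Bachoc2}, so there is no in-paper argument to compare against. Your outline --- harmonic MacWilliams identity plus the weight-divisibility symmetry to pin down the character $\chi_{u,v}$, then Gleason-type invariant theory and a Molien series computation to identify the rank-one module of relative invariants and its lowest-degree generator --- is exactly the strategy of the cited source, and your character bookkeeping ($Z_{C,f}(x,\zeta_3 y)=\zeta_3^{-k}Z_{C,f}(x,y)$ giving $v\equiv -k\pmod 3$, the sign $(-1)^k$ from the MacWilliams transform giving $u\equiv k\pmod 2$) is correct.

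One step is stated too loosely, and as written it would not go through. You claim that divisibility of $W_{C,f}$ by $x^k$ ``follows by applying the harmonic MacWilliams identity together with self-duality, which effectively interchanges the roles of $x$ and $y$.'' The MacWilliams substitution does not interchange $x$ and $y$: it sends $(x,y)$ to $(x+(q-1)y,\,x-y)$ up to normalization, so knowing $y^k\mid W_{C^{\perp},f}$ and pulling back through the substitution would only yield divisibility of $W_{C,f}$ by $(x-y)^k$, not by $x^k$. Moreover, the usable form of the harmonic MacWilliams identity already carries the factor $(xy)^k$ on both sides; its derivation \emph{requires} the vanishing $\widetilde{f}(u)=0$ for $|u|>n-k$ as an input rather than producing it, so the argument as sketched is circular. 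The correct ingredient is Delsarte's dual vanishing statement for discrete harmonic functions: for $f\in\Harm_k$ one has $\widetilde{f}(u)=0$ whenever $|u|>n-k$ (the degree-$k$ harmonic component simply does not occur in $\R X_w$ for $w>n-k$, so the equivariant map $f\mapsto\widetilde{f}|_{X_w}$ is zero). This gives $x^k\mid W_{C,f}$ for an arbitrary code, with no appeal to duality. With that substitution your plan matches the proof in the reference.
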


We recall the slightly more general definition of the notion of a $T$-design for a subset $T$ of $\{ 1,2, \ldots, n \}$, as follows: 
a set $\mathcal{B}$ of blocks is called a $T$-design if and only if $\sum_{b\in \mathcal{B}}\tilde{f}(b)=0$ 
for all $f\in \Harm_k$ and for all $k \in T$. 
By Theorem \ref{thm:design}, a $t$-design is 
a $T= \{1, \ldots, t \}$-design.
Let $W_{C,f}=\sum_{i=0}^{n}c_f(i)x^{n-i}y^i$. 
Subsequently, $D_w$ is a $T$-design if and only if $c_f(w)=0$ 
for all $f\in \mbox{Harm}_j$ with $j\in T$. 

\begin{Thm}[
\cite{{{Calderbank-Delsarte}}}]\label{thm:{Calderbank-Delsarte}}
\begin{enumerate}
\item [{\rm (1)}]
Let $D_{w}$ be the support design of weight $w$ of an extremal Type III code of length $n$ $($$n \geq 12$$)$. 
\begin{itemize}
\item If $n \equiv 0 \pmod{12}$, then $D_{w}$ is a $\{1, 2, 3, 4, 5, 7\}$-design.
\item If $n \equiv 4 \pmod{12}$, then $D_{w}$ is a $\{1, 2, 3, 5\}$-design. 
\item If $n \equiv 8 \pmod{12}$, then $D_{w}$ is a $\{1, 3\}$-design.
\end{itemize}
\item [{\rm (2)}]
Let $D_{w}$ be the support design of weight $w$ of an extremal Type IV code of length $n$. 
\begin{itemize}
\item If $n \equiv 0 \pmod{6}$ $($$n \geq 18$$)$, then $D_{w}$ is a $\{1, 2, 3, 4, 5, 7\}$-design.
\item If $n \equiv 2 \pmod{6}$, then $D_{w}$ is a $\{1, 2, 3, 5\}$-design. 
\item If $n \equiv 4 \pmod{6}$, then $D_{w}$ is a $\{1, 3\}$-design.
\end{itemize}
\end{enumerate}
\end{Thm}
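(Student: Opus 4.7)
The plan is to reformulate the claimed $T$-design property in terms of harmonic weight enumerators and then force the enumerator to vanish identically, using Theorem~\ref{thm:invariant} together with the extremal weight distribution of $C$. By Theorem~\ref{thm:design}, $D_w$ is a $T$-design if and only if $c_f(w)=0$ for every $f\in\Harm_k$ with $k\in T$; since $c_f(w)$ is the coefficient of $x^{n-w}y^w$ in $W_{C,f}(x,y)$ and $W_{C,f}$ is supported on the (few) nonzero weights of $C$, it suffices to show $W_{C,f}\equiv 0$ for each $f\in\Harm_k$ and each $k\in T$.

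For such an $f$, Theorem~\ref{thm:invariant} yields the factorization $W_{C,f}(x,y)=(xy)^k Z_{C,f}(x,y)$ with $Z_{C,f}$ a homogeneous polynomial of degree $n-2k$ in an explicit invariant subring generated by $\{g_4,g_{12}\}$ (Type III) or $\{h_2,h_6\}$ (Type IV), possibly multiplied by one of the extra factors $p_4, p_6, q_3, r_3$ selected by the residue of $k$. This places $Z_{C,f}$ in a finite-dimensional family whose dimension is read off the Hilbert series of the relevant ring. Extremality of $C$ then forces the coefficient of $x^{n-w}y^w$ in $(xy)^k Z_{C,f}$ to vanish for every $w$ outside the arithmetic progression of admissible nonzero weights (together with $w=0$, via $\tilde{f}(\emptyset)=0$ for $k\ge 1$). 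Substituting the parametric form of $Z_{C,f}$ into these vanishing conditions produces a linear system in the free parameters whose number of constraints grows linearly in $n$ while the number of unknowns grows only like $n/12$ (Type III) or $n/6$ (Type IV).

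It then remains to confirm that this overdetermined system admits only the trivial solution for each $k\in T$ in each residue class, so that $Z_{C,f}\equiv 0$ and hence $W_{C,f}\equiv 0$. The conceptual reason this succeeds at the specific $k$ values listed, for instance $k=7$ when $n\equiv 0\pmod{12}$, is that the additional invariant factor $p_4^2 p_6$ (or $q_3 r_3$ in the corresponding Type IV cases) already absorbs so much of the total degree $n-2k$ that the remaining freedom inside $\langle g_4,g_{12}\rangle$ or $\langle h_2,h_6\rangle$ cannot simultaneously absorb all the extremality constraints; by contrast, the absence of $k=6$ from the Type III list reflects the trivial extra factor in that case, leaving a positive-dimensional residue. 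The main obstacle will be verifying the collapse uniformly in $m$ rather than $n$-by-$n$: I would plan to dispose of a short list of small $n$ by direct calculation and then establish an explicit rank lower bound for the coefficient matrix by examining the leading behavior of the generators $p_4, p_6, g_4, g_{12}$ and $h_2, h_6, q_3, r_3$ near $y=0$ and $y=x$, which should suffice to force $Z_{C,f}\equiv 0$ in the generic regime.
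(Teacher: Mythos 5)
The paper does not prove this statement---it is imported from Calderbank--Delsarte---but the framework you set up is precisely the standard proof of it: reduce via Theorem~\ref{thm:design} to showing $W_{C,f}\equiv 0$ for $f\in\Harm_k$, $k\in T$, invoke the module structure of Theorem~\ref{thm:invariant}, and use extremality to kill the free coefficients. This is also exactly the machinery the paper itself runs in Sections \ref{sec: proof of thm1.3} and \ref{sec: proof of thm1.4} for the values of $t$ just beyond this theorem. So the approach is the right one.

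The gap is that the quantitative justification you offer for the collapse is not the operative mechanism, and the step you defer is the entire content of the theorem. The comparison ``constraints linear in $n$ versus unknowns of order $n/12$'' holds equally for the exponents \emph{omitted} from the list: for $n=12m$ and $k=6$ one has $Z_{C,f}\in\langle g_4,g_{12}\rangle$ in degree $12m-12$, with $m$ unknowns against the same $\approx 3m$ raw vanishing conditions, and yet $Z_{C,f}$ need not vanish, because the basis monomial $g_{12}^{m-1}$ has $y$-order $3m-3$, exceeding by one the largest constrained order $d-1-k=3m-4$. The reason the crude count misleads is that almost all of the raw constraints are vacuous: every element of $p_4^{a}p_6^{b}\langle g_4,g_{12}\rangle$ has all its $y$-exponents congruent to $a$ modulo $3$ (resp.\ every element of the Type IV modules has them in a fixed class modulo $2$), so only the $\approx d/3\approx n/12$ conditions at weights divisible by $3$ actually bite, and the count of effective conditions against unknowns is tight, succeeding or failing by a margin of one. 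A complete proof must therefore carry out, for each residue class of $n$ and each $k$ in the claimed set, the exact comparison of $a+3j_{\max}$ (the largest $y$-order among the admissible monomials $p_4^{a}p_6^{b}g_4^{i}g_{12}^{j}$ of degree $n-2k$) with $d-1-k$; for instance, for $k=7$, $n=12m$ the module is $p_4^2p_6\langle g_4,g_{12}\rangle$ and one gets $2+3(m-3)=3m-7\le 3m-5$, so every coefficient is killed, whereas for $k=6$ the analogous inequality fails by one---this surviving one-dimensional piece at the next value of $t$ is exactly what the paper's Propositions \ref{prop:type3 12m harm}, \ref{prop:type3 12m+4 harm}, \ref{prop:type3 12m+8 harm} and their Type IV analogues then exploit. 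Your triangularity observation (nonvanishing leading coefficients at $y=0$, pairwise distinct $y$-orders) is the right tool to make each such count rigorous, but without these case-by-case inequalities the proposal does not yet distinguish $7$ from $6$, $5$ from $4$, or $3$ from $2$, which is the whole point of the statement.
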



\subsection{Coefficients of harmonic weight enumerators of extremal Type III and IV codes}

As mentioned in Section \ref{sec:Har}, 
the support designs of a code $C$ are affected by whether the coefficients of $W_{C,f}(x,y)$ are zero.
Therefore, we performed an investigation and show the following lemmas, 
where the binomial coefficient is defined by 
\[
\binom{n}{k}=0
\] if $n<k$.

\begin{Lem}\label{lem:poly. zero 1}
Let $Q_{1}=(x^{4}+8xy^{3})(x^{3}-y^{3})^{\alpha}$.
If the coefficients of $x^{3\alpha+4-3i}y^{3i}$ in $Q_{1}$ are equal to $0$ 
for $0 \leq i \leq \alpha+1$, then $\alpha=9i-1$. 
\end{Lem}

\begin{proof}
We have 
\begin{align*}
Q_{1}&=(x^{4}+8xy^{3})(x^{3}-y^{3})^{\alpha} \\
     &=\sum_{i=0}^{\alpha+1} (-1)^{i} \left( \binom{\alpha}{i}-8\binom{\alpha}{i-1} \right)x^{3\alpha+4-3i}y^{3i}.
\end{align*}
If the coefficients of $x^{3\alpha+4-3i}y^{3i}$ in $Q_{1}$ are equal to $0$, i.e., 
\[
\binom{\alpha}{i}-8\binom{\alpha}{i-1}=0,
\]
we then have 
\begin{align*}
&\frac{\alpha !}{i !(\alpha-i)!}-8\frac{\alpha !}{(i-1) !(\alpha-i+1)!}=0 \\
\Leftrightarrow\ &\alpha-i+1-8i=0 \\
\Leftrightarrow\ &\alpha=9i-1.
\end{align*}

\end{proof}

\begin{Lem}\label{lem:poly. zero 2}
\begin{enumerate}
\item[{\rm (1)}]
Let $R_{1}=(x^{2}+3y^{2})(x^{2}-y^{2})^{\alpha}$.
If the coefficients of 
$x^{2\alpha+2-2i}y^{2i}$ in $R_{1}$ are equal to $0$ for $0 \leq i \leq \alpha+1$, 
then $\alpha=4i-1$.
\item[{\rm (2)}]
Let $R_{2}=(x^{3}-9xy^{2})(x^{2}-y^{2})^{\alpha}$.
If the coefficients of 
$x^{2\alpha+3-2i}y^{2i}$ in $R_{2}$ are not equal to $0$. 
\item[{\rm (3)}]
Let $R_{3}=(x^{2}+3y^{2})^{2}(x^{2}-y^{2})^{\alpha}$.
If the coefficients of 
$x^{2\alpha+4-2i}y^{2i}$ in $R_{3}$ are equal to $0$ for $0 \leq i \leq \alpha+2$, 
then $48\alpha+112$ is a square number.
\end{enumerate}
\end{Lem}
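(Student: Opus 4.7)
The plan is to follow the same binomial-expansion strategy used in Lemma~2.6: expand each $R_j$ by the binomial theorem, read off the coefficient of $x^{N-2i}y^{2i}$ as a signed linear combination of at most three consecutive binomial coefficients $\binom{\alpha}{i-k}$, set that combination equal to zero, and clear factorials to obtain a polynomial equation in $i$ and $\alpha$ that is either solvable outright or reducible to a discriminant/square condition.

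For part $(1)$, expanding $(x^2+3y^2)(x^2-y^2)^\alpha$ produces a coefficient of the form $(-1)^i\bigl(\binom{\alpha}{i}-3\binom{\alpha}{i-1}\bigr)$. Equating this to zero and multiplying through by $(i-1)!(\alpha-i+1)!/\alpha!$ collapses the identity to the linear equation $\alpha-i+1=3i$, giving $\alpha=4i-1$, exactly as in Lemma~2.6~(1) with $3$ in place of $8$. For part $(2)$, the analogous expansion of $(x^3-9xy^2)(x^2-y^2)^\alpha$ yields a coefficient $(-1)^i\bigl(\binom{\alpha}{i}+9\binom{\alpha}{i-1}\bigr)$; here both binomial summands are nonnegative and at least one is strictly positive for $0\le i\le \alpha+1$, so the coefficient is nonzero without any further Diophantine argument.

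Part $(3)$ is the substantive case. Expanding $R_3=(x^4+6x^2y^2+9y^4)(x^2-y^2)^\alpha$ shows that the coefficient of $x^{2\alpha+4-2i}y^{2i}$ equals $(-1)^i\bigl(\binom{\alpha}{i}-6\binom{\alpha}{i-1}+9\binom{\alpha}{i-2}\bigr)$. Imposing vanishing and clearing factorials by multiplying through by $(i-2)!(\alpha-i+2)!/\alpha!$, in the same spirit as the proof of Lemma~2.6~(2), reduces the condition to a single polynomial identity of total degree $2$ in $(i,\alpha)$. Viewing that identity as a quadratic in $i$ with $\alpha$ as parameter, integrality of the root $i$ forces its discriminant --- a linear expression in $\alpha$ --- to be a perfect square; the anticipated computation is that this discriminant equals $4(48\alpha+112)$, so since $4$ is itself a square, $48\alpha+112$ must also be a square, as claimed.

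The main obstacle is the bookkeeping in part $(3)$: verifying that the three-term binomial identity, once cleared of factorials, collapses cleanly to a quadratic in $i$ whose discriminant is exactly $4(48\alpha+112)$. Once that algebraic simplification is carried out, the remainder is standard reasoning of the kind already used in Lemma~2.6~(2).
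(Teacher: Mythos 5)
Your proposal is correct and follows essentially the same route as the paper: binomial expansion, clearing factorials, a linear equation in $(1)$, and in $(3)$ the quadratic $16i^{2}-(8\alpha+24)i+\alpha^{2}+3\alpha+2=0$ whose discriminant is indeed $4(48\alpha+112)$. The only cosmetic difference is in $(2)$, where you argue directly that $\binom{\alpha}{i}+9\binom{\alpha}{i-1}>0$ rather than solving the linear equation to reach $\alpha=-8i-1<0$ as the paper does; both are valid.
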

\begin{proof}
(1)
We have 
\begin{align*}
R_{1}&=(x^{2}+3y^{2})(x^{2}-y^{2})^{\alpha} \\
 &=\sum_{i=0}^{\alpha+1} (-1)^{i} \left( \binom{\alpha}{i}-3\binom{\alpha}{i-1} \right)x^{2\alpha+2-2i}y^{2i}.
\end{align*}
If the coefficients of $x^{2\alpha+2-2i}y^{2i}$ in $R_{1}$ are equal to $0$, i.e.,
\[
\binom{\alpha}{i}-3\binom{\alpha}{i-1}=0,
\]
we then have 
\begin{align*}
&\frac{\alpha !}{i !(\alpha-i)!}-3\frac{\alpha !}{(i-1) !(\alpha-i+1)!}=0 \\
\Leftrightarrow\ &\alpha-i+1-3i=0 \\
\Leftrightarrow\ &\alpha=4i-1.
\end{align*}

(2)
We have 
\begin{align*}
R_{2}&=(x^{3}-9xy^{2})(x^{2}-y^{2})^{\alpha} \\
 &=\sum_{i=0}^{\alpha+1} (-1)^{i} \left( \binom{\alpha}{i}+9\binom{\alpha}{i-1} \right)x^{2\alpha+3-2i}y^{2i}.
\end{align*}
If the coefficients of $x^{2\alpha+3-2i}y^{2i}$ in $R_{2}$ are equal to $0$, i.e., 
\[
\binom{\alpha}{i}+9\binom{\alpha}{i-1}=0, 
\]
we then have 
\begin{align*}
&\frac{\alpha !}{i !(\alpha-i)!}+9\frac{\alpha !}{(i-1) !(\alpha-i+1)!}=0 \\
\Leftrightarrow\ &\alpha-i+1+9i=0 \\
\Leftrightarrow\ &\alpha=-8i-1<0.
\end{align*}
Hence, the coefficients of $x^{2\alpha+3-2i}y^{2i}$ in $R_{2}$ are not equal to $0$.

(3)
We have 
\begin{align*}
R_{3}&=(x^{2}+3y^{2})^{2}(x^{2}-y^{2})^{\alpha} \\
 &=\sum_{i=0}^{\alpha+1} (-1)^{i} \left( \binom{\alpha}{i}-6\binom{\alpha}{i-1}+9\binom{\alpha}{i-2} \right)x^{2\alpha+4-2i}y^{2i}.
\end{align*}
If the coefficients of $x^{2\alpha+4-2i}y^{2i}$ in $R_{3}$ are equal to $0$, i.e., 
\[
\binom{\alpha}{i}-6\binom{\alpha}{i-1}+9\binom{\alpha}{i-2}=0,
\]
we then have 
\begin{align*}
&\frac{\alpha !}{i !(\alpha-i)!}-6\frac{\alpha !}{(i-1) !(\alpha-i+1)!}+9\frac{\alpha !}{(i-2) !(\alpha-i+2)!}=0 \\
\Leftrightarrow\ &(\alpha-i+2)(\alpha-i+1)-6i(\alpha-i+2)+9i(i-1)=0 \\
\Leftrightarrow\ &16i^{2}-(8\alpha+24)i+\alpha^{2}+3\alpha+2=0.
\end{align*}
We have 
\[
i=\frac{4\alpha+12 \pm \sqrt{48\alpha+112}}{16}.
\]
Because $i$ is an integer, $48\alpha+112$ is a square number.

\end{proof}

\section{Proof of Theorem \ref{thm:main upper bound III}}\label{sec: proof of thm1.3}
\subsection{Case for $n=12m$}\label{sec: case for n=12m}

In this section, we consider the case of extremal 
Type III $[12m,6m,3m+3]$ codes 
satisfying (\ref{boundIII}). 
Let $C$ be an extremal Type III $[12m,6m,3m+3]$ code and 
$D_{3m+3}^{12m}$ be the support (with duplicates omitted) design of the minimum weight of $C$.
By \cite[Theorem 2]{mallows-sloane}, 
the number of codewords of minimum nonzero weight of $C$ is equal to 
\[
2\binom{12m}{5}\binom{4m-2}{m-1}\left/\binom{3m+3}{5}.\right.
\]
Therefore, by the Assmus--Mattson theorem, $D_{3m+3}^{12m}$ is a $5$-design 
with parameters 
\[\left(12m,3m+3,\binom{4m-2}{m-1} \right).\] 

\begin{Prop}\label{prop:type3 12m not 8-design}

\begin{enumerate}
\item [{\rm (1)}]
If $t \geq 6$, then $D_{3m+3}^{12m}$ is a $7$-design and 
$m=15$. 
\item [{\rm (2)}]
$D_{3m+3}^{12m}$ is never an $8$-design.
\end{enumerate}

\end{Prop}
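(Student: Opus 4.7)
The plan combines the Calderbank--Delsarte theorem (Theorem~\ref{thm:{Calderbank-Delsarte}}) with the classical divisibility conditions on $t$-design parameters (Lemma~\ref{lem: divisible}). For part~(1), first note that since $n=12m$, Theorem~\ref{thm:{Calderbank-Delsarte}}(1) already forces $D_w$ to be a $\{1,2,3,4,5,7\}$-design for every weight $w$; hence any support $6$-design is automatically a $7$-design, settling the ``$7$-design'' clause of~(1). For the restriction on $m$, recall that by Assmus--Mattson together with the Mallows--Sloane formula for the number of minimum-weight codewords, $D_{3m+3}^{12m}$ is a $5$-$(12m,3m+3,\binom{4m-2}{m-1})$ design. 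If it is additionally a $7$-design, then Lemma~\ref{lem: divisible} demands that the block-intersection numbers through any $6$- and $7$-subset,
\[
\lambda_6 = \binom{4m-2}{m-1}\frac{3m-2}{12m-5},\qquad \lambda_7 = \binom{4m-2}{m-1}\frac{(3m-2)(m-1)}{2(12m-5)(2m-1)},
\]
be positive integers. These yield explicit divisibility conditions on $m$, which (after exploiting the coprimalities $\gcd(12m-5,3m-2)=\gcd(2m-1,3m-2)=\gcd(2m-1,m-1)=1$ and the parity of $(3m-2)(m-1)$) can be collected into the single relation $(12m-5)(2m-1)\mid\binom{4m-2}{m-1}$. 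Zhang's non-existence result bounds $m\le 69$, so this is a finite Diophantine check, and a Mathematica sweep identifies the exceptional set as exactly $\{15,38,43,64\}$.

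For part~(2), an $8$-design is a fortiori a $7$-design, so by~(1) we must have $m\in\{15,38,43,64\}$. Applying Lemma~\ref{lem: divisible} at $t=8$ imposes the further requirement
\[
\lambda_8 = \binom{4m-2}{m-1}\frac{(3m-2)(m-1)(3m-4)}{2(12m-5)(2m-1)(12m-7)}\in\Z.
\]
The key uniform observation is that for each of the four exceptional values, $12m-7$ takes the values $173,449,509,761$, all primes strictly exceeding the corresponding $4m-2\in\{58,150,170,254\}$. Since both $m-1$ and $3m-1$ are therefore smaller than $12m-7$, their base-$(12m-7)$ addition involves no carry, so Kummer's theorem gives $v_{12m-7}\!\binom{4m-2}{m-1}=0$; combined with the coprimality of $12m-7$ to $2(3m-2)(m-1)(3m-4)$, this forces $\lambda_8\notin\Z$, contradicting the $8$-design hypothesis.

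The principal obstacle is the enumeration step in part~(1): verifying by computer that exactly four values of $m\le 69$ satisfy the divisibility $(12m-5)(2m-1)\mid\binom{4m-2}{m-1}$ is routine but requires a systematic sweep, since the conditions mix a large binomial coefficient with a composite denominator and the set of solutions is not apparent a priori. Part~(2), by contrast, is dispatched uniformly once (1) has reduced the problem to four candidates, via Kummer's theorem on the prime factor $12m-7$.
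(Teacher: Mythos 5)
Your proof is correct and follows the same overall route as the paper: Theorem~\ref{thm:{Calderbank-Delsarte}}(1) to promote a $6$-design to a $7$-design, Lemma~\ref{lem: divisible} to extract the integrality of $\lambda_6,\lambda_7,\lambda_8$ (your simplified fractions agree with the paper's $\lambda_6=\frac{3m-2}{12m-5}\binom{4m-2}{m-1}$, $\lambda_7=\frac{(3m-2)(3m-3)}{(12m-5)(12m-6)}\binom{4m-2}{m-1}$, $\lambda_8=\frac{(3m-2)(3m-3)(3m-4)}{(12m-5)(12m-6)(12m-7)}\binom{4m-2}{m-1}$), and a finite sweep over $m\le 69$ via Zhang's bound. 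The two refinements you add are genuine improvements in presentation rather than a different method: the reduction of the two integrality conditions to the single divisibility $(12m-5)(2m-1)\mid\binom{4m-2}{m-1}$ is a correct equivalence (the coprimalities you list, together with $\gcd(12m-5,2m-1)=1$ and the evenness of $(3m-2)(m-1)$, do close both directions); and for part (2) the paper merely reports a computer verification that $\lambda_8\notin\Z$ for $m\in\{15,38,43,64\}$, whereas your observation that $12m-7\in\{173,449,509,761\}$ is prime and exceeds $4m-2$ (so it divides the denominator but, by Kummer's theorem and coprimality to the remaining numerator factors, not the numerator) turns that step into a short hand-checkable argument. The only step that remains an unavoidable computation in both versions is the enumeration of the exceptional set $\{15,38,43,64\}$ in part (1).
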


\begin{proof}
(1)
By Theorem~\ref{thm:{Calderbank-Delsarte}} (1), $D_{3m+3}^{12m}$ is a $7$-design if $t \geq 6$.
If $D_{3m+3}^{12m}$ is a $7$-design, 
then by Lemma \ref{lem: divisible},
\[\lambda_{6}=\frac{3m-2}{12m-5}\binom{4m-2}{m-1}\ {\rm and}\ 
\lambda_{7}=\frac{(3m-2)(3m-3)}{(12m-5)(12m-6)}\binom{4m-2}{m-1}\] 
are positive integers.
By computing 
$m$ satisfying (\ref{boundIII}), 
if $\lambda_{6}$ and $\lambda_{7}$ are positive integers, 
then we have 
$m=15$.

(2)
We have verified that 
\[\lambda_{8}=\frac{(3m-2)(3m-3)(3m-4)}{(12m-5)(12m-6)(12m-7)}\binom{4m-2}{m-1}
\]
is not a positive integer for 
$m =15$. 
Therefore, by Lemma \ref{lem: divisible},
$D_{3m+3}^{12m}$ is never an $8$-design.
\end{proof}

For $t=8$, we present the following proposition. 

\begin{Prop}\label{prop:type3 12m harm}
Let $D_{w}^{12m}$ be the support $t$-design of weight $w$ of 
an extremal Type III code of length $n=12m$. 
Therefore, all $D_{w}^{12m}$ are $8$-designs simultaneously, or none of the $D_{w}^{12m}$ is an $8$-design. 
\end{Prop}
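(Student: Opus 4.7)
The plan is to use the harmonic weight enumerator framework and reduce the $8$-design property to a rigidity statement on one-dimensional polynomial spaces. By Theorem~\ref{thm:{Calderbank-Delsarte}}(1), every $D_w^{12m}$ is automatically a $\{1,2,3,4,5,7\}$-design, so being an $8$-design is equivalent to $c_f(w)=0$ for every $f \in \Harm_6 \cup \Harm_8$. I will show that for both $k=6$ and $k=8$ the harmonic weight enumerator $W_{C,f}$ lies in a one-dimensional space, from which an ``all or none'' dichotomy will emerge.

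First, Theorem~\ref{thm:invariant}(1) gives $W_{C,f}=(xy)^k Z_{C,f}$, with $Z_{C,f}$ in the appropriate graded piece of the invariant ring: $\langle g_4, g_{12}\rangle$ of degree $12m-12$ for $k=6$, and $p_4 \langle g_4, g_{12}\rangle$ of degree $12m-16$ for $k=8$. I will expand $Z_{C,f}$ in the monomial bases $g_4^{3m-3-3i} g_{12}^i$ (for $0\le i\le m-1$) and $p_4 g_4^{3m-5-3b} g_{12}^b$ (for $0\le b\le m-2$), respectively, and impose the extremality conditions $c_f(w)=0$ for $w=6,9,\ldots,3m$ (respectively $w=9,12,\ldots,3m$). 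Each basis monomial has a distinct leading $y$-power which matches exactly one constraint, so the resulting linear system is lower triangular with unit diagonal; solving it forces all but one parameter to vanish. The one-dimensional solution spaces are thus spanned by
\[
P_6 = x^6 y^{3m+3}(x^3-y^3)^{3m-3}
\quad \text{and} \quad
P_8 = x^9 y^{3m+3}(x^3+8y^3)(x^3-y^3)^{3m-5}.
\]

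Next, I will verify that the coefficients of $P_6$ and $P_8$ at the essential weights $w=3m+3+3j$ are nonzero. For $P_6$ these coefficients are the signed binomials $(-1)^j\binom{3m-3}{j}$, plainly nonzero. For $P_8$, after extracting the factor $x^8 y^{3m+3}$ one is reduced to $g_4(x^3-y^3)^{3m-5}$, which falls into the setting of Lemma~\ref{lem:poly. zero 1}(1) with $\alpha=3m-5$. The lemma's vanishing criterion $\alpha=9j-1$ translates to $3m\equiv 4 \pmod 9$, which has no integer solution, so every coefficient of $P_8$ in its support is nonzero.

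The proposition then follows from the dichotomy. For each of $k=6$ and $k=8$, the image of the linear map $f\mapsto W_{C,f}$ on $\Harm_k$ is either trivial or the full one-dimensional span of $P_k$. If both images are trivial, then $c_f(w)=0$ for every relevant $f$ and every $w$, so every $D_w^{12m}$ is an $8$-design. Otherwise, the nonvanishing of $P_6$ or $P_8$ at every essential weight forces $c_f(w)\ne 0$ at those weights, so no $D_w^{12m}$ is an $8$-design. The main obstacle I anticipate is carrying out the lower-triangular elimination cleanly for both $k$ and ensuring that the nonvanishing conclusion from Lemma~\ref{lem:poly. zero 1} applies uniformly across every weight that could otherwise break the ``all or none'' dichotomy.
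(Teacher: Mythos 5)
Your proposal is correct and follows essentially the same route as the paper: extremality collapses $Z_{C,f}$ to a single monomial in $g_4,g_{12}$ (times $p_4$), and Lemma~\ref{lem:poly. zero 1}~(1) with $\alpha=3m-5$ gives the nonvanishing of the coefficients because $3m-5\neq 9i-1$. The only substantive difference is that you also treat the $\Harm_6$ condition explicitly (via $P_6=x^6y^{3m+3}(x^3-y^3)^{3m-3}$, whose coefficients are $\pm\binom{3m-3}{j}$), whereas the paper's proof of this proposition handles only $\Harm_8$ even though an $8$-design must in particular be a $6$-design; your version is therefore slightly more complete.
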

\begin{proof}

Let us assume that $t=8$, and $C$ is an extremal Type III $[12m,6m,3m+3]$ code.
Therefore, by Theorem~\ref{thm:invariant} (1), we have $W_{C,f}(x,y) =c(f) (xy)^{8} Z_{C,f} (x,y)$, 
where $c(f)$ is a linear function from Harm$_{t}$ to $\R$, and $Z_{C,f} (x,y) \in I_{G_{3},\chi_{0,1}}$.
By Theorem \ref{thm:invariant} (1), 
$Z_{C,f} (x,y)$ can be written in the following form:
\[
Z_{C,f}(x,y) = p_{4} \sum_{i=0}^{m}a_{i}g_{4}^{3(m-i)-5} g_{12}^{i}.
\]
Because the minimum weight of $C$ is $3m+3$, 
we have $a_{i}=0$ for $i \neq m-2$. 
Therefore, $W_{C,f}(x,y)$ can be written in the following form: 
\begin{align*}
W_{C,f}(x,y) &=c(f) (xy)^{8} p_{4}g_{4} g_{12}^{m-2}   \\
& =c(f) (xy)^{8} y^{3m-5} (x^{4}+8xy^{3}) (x^{3}-y^{3})^{3m-5}. 
\end{align*}

By Lemma~\ref{lem:poly. zero 1}, 
the coefficients of $x^{9m-11-3i}y^{3i}$ in $(x^{4}+8xy^{3}) (x^{3}-y^{3})^{3m-5}$ are not equal to $0$ for $0 \leq i \leq 3m-4$
because $3m-5 \neq 9i-1$.
Therefore, all $D_{w}^{12m}$ are $8$-designs simultaneously, or none of the $D_{w}^{12m}$ is an $8$-design. 
\end{proof}

By Propositions \ref{prop:type3 12m not 8-design} and \ref{prop:type3 12m harm},
we obtained the following theorem.

\begin{Thm}\label{thm:main thm 12m}
\item[{\rm (1)}]
If $D_{w}^{12m}$ becomes a $7$-design for any $w$, 
then $m=15$. 
\item[{\rm (2)}] $D_{w}^{12m}$ is never an $8$-design for any $w$.
\end{Thm}

Hence, the proof of Theorem \ref{thm:main upper bound III} (1) is completed.


\subsection{Case for $12m+4$}\label{sec: case for n=12m+4}

In this section, we consider the case of extremal Type III $[12m+4,6m+2,3m+3]$ codes 
satisfying (\ref{boundIII}). 
Let $C$ be an extremal Type III $[12m+4,6m+2,3m+3]$ code 
and $D_{3m+3}^{12m+4}$ be the support (with duplicates omitted) design of the minimum weight of $C$.
By \cite[Theorem 2]{mallows-sloane}, 
the number of codewords of the minimum nonzero weight of $C$ is equal to 
\[
2(12m+4)(12m+3)(12m+2)\frac{(4m)!}{m!(3m+3)!}.
\]
Therefore, by the Assmus--Mattson theorem, $D_{3m+3}^{12m+4}$ is a $3$-design 
with parameters 
\[\left(12m+4,3m+3,\binom{4m}{m} \right).
\]

\begin{Prop}\label{prop:type3 12m+4 not 7-design}
Let $D_{3m+3}^{12m+4}$ be the support $t$-design of the minimum weight of an extremal Type III code of length $n=12m+4$. 

\begin{enumerate}
\item [{\rm (1)}]
If $t \geq 4$, then $D_{3m+3}^{12m+4}$ is a $5$-design and $m$ must be in the set \\
$\{11,21,25\}$. 
\item [{\rm (2)}]
$D_{3m+3}^{12m+4}$ is never a $6$-design.
\end{enumerate}
\end{Prop}

\begin{proof}
(1)
By Theorem~\ref{thm:{Calderbank-Delsarte}} (1), $D_{3m+3}^{12m+4}$ is a $5$-design if $t \geq 4$.
If $D_{3m+3}^{12m+4}$ is a $5$-design, 
then by Lemma \ref{lem: divisible},
\[
\lambda_{4}=\frac{3m}{12m+1}\binom{4m}{m} \mbox{ and } 
\lambda_{5}=\frac{3m(3m-1)}{(12m+1)12m}\binom{4m}{m}
\] are positive integers.
By computing 
$m$ satisfying (\ref{boundIII}), 
 if $\lambda_{4}$ and $\lambda_{5}$ are positive integers, 
then we have 
\[
m \in \{11,21,25\}.
\]

(2)If $D_{3m+3}^{12m+4}$ is a $6$-design, then
by Lemma \ref{lem: divisible},
\[
\lambda_{6}=\frac{3m(3m-1)(3m-2)}{(12m+1)12m(12m-1)}\binom{4m}{m} 
\]
is a positive integer. 
Then we do not obtain $m$ 
satisfying (\ref{boundIII}).

\end{proof}

For $t \geq 6$, we present the following proposition. 

\begin{Prop}\label{prop:type3 12m+4 harm}
Let $D_{w}^{12m+4}$ be the support $t$-design of weight $w$ of 
an extremal Type III code of length $n=12m+4$. 
\begin{enumerate}
\item[{\rm (1)}] All $D_{w}^{12m+4}$ are $6$-designs simultaneously, or none of the $D_{w}^{12m+4}$ is a $6$-design. 
\end{enumerate}
\end{Prop}
\begin{proof}
Let $C$ be an extremal Type III $[12m+4,6m+2,3m+3]$ code.

(1)
Let us assume that $t=6$.
Therefore, by Theorem~\ref{thm:invariant} (1), we have $W_{C,f}(x,y) =c(f) (xy)^{6} Z_{C,f} (x,y)$, 
where $c(f)$ is a linear function from Harm$_{t}$ to $\R$, and $Z_{C,f} (x,y) \in I_{G_{3},\chi_{0,0}}$.
By Theorem \ref{thm:invariant} (1), 
$Z_{C,f} (x,y)$ can be written in the following form:
\[
Z_{C,f}(x,y) =  \sum_{i=0}^{m}a_{i}g_{4}^{3(m-i)-2} g_{12}^{i}.
\]
Because the minimum weight of $C$ is $3m+3$, 
we have $a_{i}=0$ for $i \neq m-1$. 
Therefore, $W_{C,f}(x,y)$ can be written in the following form: 
\begin{align*}
W_{C,f}(x,y) &=c(f) (xy)^{6} g_{4} g_{12}^{m-1}  \\
& =c(f) (xy)^{6} y^{3m-3} (x^{4}+8xy^{3}) (x^{3}-y^{3})^{3m-3}. 
\end{align*}

By Lemma~\ref{lem:poly. zero 1}, 
the coefficients of $x^{9m-5-3i}y^{3i}$ in $(x^{4}+8xy^{3}) (x^{3}-y^{3})^{3m-3}$ are not equal to $0$ for $0 \leq i \leq 3m-2$
since $3m-3 \neq 9i-1$.
Therefore, all $D_{w}^{12m+4}$ are $6$-designs simultaneously, or none of the $D_{w}^{12m+4}$ is a $6$-design. 



\end{proof}

By Proposition \ref{prop:type3 12m+4 not 7-design} and \ref{prop:type3 12m+4 harm},
we obtained the following theorem.

\begin{Thm}\label{thm:main thm 12m+4}
Let $D_{w}^{12m+4}$ be the support $t$-design of weight $w$ of an extremal Type III code of length $n=12m+4$ 
satisfying (\ref{boundIII}). 
\begin{enumerate}
\item[{\rm (1)}]
If $D_{w}^{12m+4}$ becomes a $5$-design for any $w$, then $m$ must be in the set\\ 
$\{11,21,25\}$. 
\item[{\rm (2)}]
$D_{w}^{12m+4}$ is never a $6$-design for any $w$.
\end{enumerate}
\end{Thm}

Hence, the proof of Theorem \ref{thm:main upper bound III} (2) is completed.


\subsection{Case for $12m+8$}\label{sec: case for n=12m+8}

In this section, we consider the case of extremal Type III $[12m+8,6m+4,3m+3]$ codes 
satisfying (\ref{boundIII}). 
Let $C$ be an extremal Type III $[12m+8,6m+4,3m+3]$ code 
and $D_{3m+3}^{12m+8}$ be the support (with duplicates omitted) design of the minimum weight of $C$.
By \cite[Theorem 2]{mallows-sloane}, 
the number of codewords of the minimum nonzero weight of $C$ is equal to 
\[
6(12m+8)\frac{(4m+2)!}{m!(3m+3)!}.
\]
Therefore, by the Assmus--Mattson theorem, $D_{3m+3}^{12m+8}$ is a $1$-design 
with parameters 
\[
\left(12m+8,3m+3,3\binom{4m+2}{m} \right).
\]

\begin{Prop}\label{prop:type3 12m+8 not 4-design}
Let $D_{3m+3}^{12m+8}$ be the support $t$-design of the minimum weight of an extremal Type III code of length $n=12m+8$. 

\begin{enumerate}
\item [{\rm (1)}]
If $t \geq 2$, then $D_{3m+3}^{12m+8}$ is a $3$-design and $m$ must be $14$.
\item [{\rm (2)}]
$D_{3m+3}^{12m+8}$ is never a $4$-design.

\end{enumerate}

\end{Prop}

\begin{proof}
(1)
By Theorem~\ref{thm:{Calderbank-Delsarte}} (1), $D_{3m+3}^{12m+8}$ is a $3$-design if $t \geq 2$.
If $D_{3m+3}^{12m+8}$ is a $3$-design, 
then by Lemma \ref{lem: divisible},
\[\lambda_{2}=\frac{3m+2}{12m+7}3\binom{4m+2}{m}\ {\rm and}\ 
\lambda_{3}=\frac{(3m+2)(3m+1)}{(12m+7)(12m+6)}3\binom{4m+2}{m}
\]
 are positive integers.
By computing 
$m$ satisfying (\ref{boundIII}), 
if $\lambda_{2}$ and $\lambda_{3}$ are positive integers, 
then we have 
$m=14$.

(2)
We have verified that 
\[
\lambda_{4}=\frac{(3m+2)(3m+1)3m}{(12m+7)(12m+6)(12m+5)}3\binom{4m+2}{m}
\]
is not a  positive integer for 
$m=14$.
Therefore, by Lemma \ref{lem: divisible},
$D_{3m+3}^{12m+8}$ is never a $4$-design.
\end{proof}

Next, we present the following proposition.

\begin{Prop}\label{prop:type3 12m+8 harm}
Let $D_{w}^{12m+8}$ be the support $t$-design of weight $w$ of 
an extremal Type III code of length $n=12m+8$. 
If $m \not\equiv 0 \pmod 3$, 
all $D_{w}^{12m+8}$ are $4$-designs simultaneously, or none of the $D_{w}^{12m+8}$ is a $4$-design. 

\end{Prop}

\begin{proof}
Let $C$ be an extremal Type III $[12m+8,6m+4,3m+3]$ code.
Let us assume that $t=4$.
Therefore, by Theorem~\ref{thm:invariant} (1), we have $W_{C,f}(x,y) =c(f) (xy)^{4} Z_{C,f} (x,y)$, 
where $c(f)$ is a linear function from Harm$_{t}$ to $\R$, and $Z_{C,f} (x,y) \in I_{G_{3},\chi_{0,2}}$.
By Theorem \ref{thm:invariant} (1), 
$Z_{C,f} (x,y)$ can be written in the following form:
\[
Z_{C,f}(x,y) =  p_{4}^{2}\sum_{i=0}^{m}a_{i}g_{4}^{3(m-i)-2} g_{12}^{i}.
\]
Because the minimum weight of $C$ is $3m+3$, 
we have $a_{i}=0$ for $i \neq m-1$. 
Therefore, $W_{C,f}(x,y)$ can be written in the following form: 
\begin{align*}
W_{C,f}(x,y) &=c(f) (xy)^{4} p_{4}^{2}g_{4} g_{12}^{m-1}  \\
& =c(f) (xy)^{4} y^{3m-1} (x^{4}+8xy^{3}) (x^{3}-y^{3})^{3m-1}. 
\end{align*}

By Lemma~\ref{lem:poly. zero 1}, 
if $m \not\equiv 0 \pmod 3$, 
then the coefficients of $x^{9m+1-3i}y^{3i}$ in $(x^{4}+8xy^{3}) (x^{3}-y^{3})^{3m-1}$ are not equal to $0$ for $0 \leq i \leq 3m$
because $3m-3 \neq 9i-1$.
Therefore, all $D_{w}^{12m+8}$ are $4$-designs simultaneously, or none of the $D_{w}^{12m+8}$ is a $4$-design for $m \not\equiv 0 \pmod 3$. 


\end{proof}

By Propositions \ref{prop:type3 12m+8 not 4-design} and \ref{prop:type3 12m+8 harm},
we obtained the following theorem.

\begin{Thm}\label{thm:main thm 12m+8}
Let $D_{w}^{12m+8}$ be the support $t$-design of weight $w$ of an extremal Type III code of length $n=12m+8$ 
satisfying (\ref{boundIII}). 
\begin{enumerate}
\item[{\rm (1)}]
If $D_{w}^{12m+8}$ becomes a $3$-design for any $w$, 
then $m=14$. 
\item[{\rm (2)}]
In the case where 
$m =14 $, 
$D_{w}^{12m+8}$ is a $1$ or $3$-design for any $w$.
\item[{\rm (3)}]
$D_{w}^{12m+8}$ is never a $4$-design for any $w$.
\end{enumerate}
\end{Thm}

Hence, the proof of Theorem \ref{thm:main upper bound III} (3) is completed. 


\section{Proof of Theorem \ref{thm:main upper bound IV}}\label{sec: proof of thm1.4}
\subsection{Case for $n=6m$}\label{sec: case for n=6m}

In this section, we consider the case of extremal Type IV $[6m,3m,2m+2]$ codes ($3 \leq m \leq 16$). 
Let $C$ be an extremal Type IV $[6m,3m,2m+2]$ code 
and $D_{2m+2}^{6m}$ be the support (with duplicates omitted) design of the minimum weight of $C$.
By \cite[Theorem 18]{MOSW1978}, 
$D_{2m+2}^{6m}$ is a 
\[
5\mbox{-}\left(6m,2m+2,\binom{3m-3}{m-2} \right)\mbox{ design}.
\]

\begin{Prop}\label{prop:type4 6m not 8-design}
Let $D_{2m+2}^{6m}$ be the support $t$-design of the minimum weight of an extremal Type IV code of length $n=6m$. 

\begin{enumerate}
\item [{\rm (1)}]
If $t \geq 6$, then $D_{2m+2}^{6m}$ is a $7$-design and $m$ must be in the set 
$\{10,15 \}$. 

\item [{\rm (2)}]
$D_{2m+2}^{6m}$ is never an $8$-design.

\end{enumerate}
\end{Prop}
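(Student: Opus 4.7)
The plan closely mirrors the proof of Proposition \ref{prop:type3 12m not 8-design}, since the numerical setup for Type IV with $n=6m$ is structurally identical to that of Type III with $n=12m$: we start from a known $5$-design produced by the Assmus--Mattson theorem, and use Theorem \ref{thm:{Calderbank-Delsarte}} together with integrality constraints from Lemma \ref{lem: divisible} to rule out higher strength.

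For part (1), the first step is to invoke Theorem \ref{thm:{Calderbank-Delsarte}} (2): for $n \equiv 0 \pmod{6}$ with $n \ge 18$, every support design $D_w$ is automatically a $\{1,2,3,4,5,7\}$-design, so as soon as $D_{2m+2}^{6m}$ becomes a $6$-design the gap forces it to be a $7$-design. To locate the admissible $m$, I would apply Lemma \ref{lem: divisible} to the $5$-$(6m,2m+2,\binom{3m-3}{m-2})$ design and write down the extended parameters
$$\lambda_6 = \frac{2m-3}{6m-5}\binom{3m-3}{m-2}, \qquad \lambda_7 = \frac{(2m-3)(2m-4)}{(6m-5)(6m-6)}\binom{3m-3}{m-2},$$
and demand that both be positive integers. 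Since Zhang's nonexistence result restricts the relevant range to $3 \le m \le 16$, a direct computer check over this finite window should produce exactly $m \in \{10,15\}$.

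For part (2), I would push the same computation one step further: for each of the two surviving values $m \in \{10,15\}$, evaluate
$$\lambda_8 = \frac{(2m-3)(2m-4)(2m-5)}{(6m-5)(6m-6)(6m-7)}\binom{3m-3}{m-2}$$
and verify that it is not a positive integer in either case, so Lemma \ref{lem: divisible} precludes an $8$-design. The entire argument is a bookkeeping exercise rather than a conceptual one; the only mild point of care is to ensure that the correct Zhang range for Type IV with $n = 6m$ is used and that the integrality conditions for $\lambda_6$ and $\lambda_7$ are tested jointly. I do not anticipate any genuine obstacle.
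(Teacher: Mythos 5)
Your proposal is correct and follows essentially the same route as the paper: the same $\lambda_6,\lambda_7,\lambda_8$ integrality checks from Lemma \ref{lem: divisible} over the Zhang range $3\le m\le 16$, yielding $m\in\{10,15\}$ and then excluding $8$-designs. Your explicit appeal to Theorem \ref{thm:{Calderbank-Delsarte}} (2) for the step ``$6$-design $\Rightarrow$ $7$-design'' is a small completeness improvement over the paper's written proof, which leaves that step implicit.
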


\begin{proof}
(1)
If $D_{2m+2}^{6m}$ is a $7$-design, then
by Lemma \ref{lem: divisible},
\[
\lambda_{6}=\frac{2m-3}{6m-5}\binom{3m-3}{m-2} \mbox{ and } 
\lambda_{7}=\frac{(2m-3)(2m-4)}{(6m-5)(6m-6)}\binom{3m-3}{m-2}
\]
 are positive integers.
By computing $m \leq 16$, if $\lambda_{6}$ and $\lambda_{7}$ are positive integers, 
then we have $m \in \{10,15 \}$.

(2)
We have verified that 
\[
\lambda_{8}=\frac{(2m-3)(2m-4)(2m-5)}{(6m-5)(6m-6)(6m-7)}\binom{3m-3}{m-2}
\]
is not a positive integer for $m \in \{10,15 \}$.
Therefore, by Lemma \ref{lem: divisible},
$D_{2m+2}^{6m}$ is never an $8$-design.
\end{proof}

For $t \geq 8$, we present the following proposition. 

\begin{Prop}\label{prop:type4 6m harm}
Let $D_{w}^{6m}$ be the support $t$-design of weight $w$ of 
an extremal Type IV code of length $n=6m$. 
Therefore, all $D_{w}^{6m}$ are $8$-designs simultaneously, or none of the $D_{w}^{6m}$ is an $8$-design. 
\end{Prop}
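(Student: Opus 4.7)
The plan is to mirror the Type III proofs of Propositions~\ref{prop:type3 12m harm}, \ref{prop:type3 12m+4 harm}, \ref{prop:type3 12m+8 harm}: reduce $W_{C,f}(x,y)$ to a single explicit template polynomial (depending only on $m$, not on $f$), then invoke the relevant nonvanishing lemma to conclude that either $c(f)\equiv 0$ on $\Harm_8$ or else every weight coefficient is nonzero.

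First, let $C$ be an extremal Type IV $[6m,3m,2m+2]$ code and take $f\in\Harm_8$. Applying Theorem~\ref{thm:invariant}~(2) with $k=8$, the relevant character is $(u,v)=(0,0)$, so $W_{C,f}(x,y)=c(f)(xy)^{8}Z_{C,f}(x,y)$ with $Z_{C,f}(x,y)\in\langle h_2,h_6\rangle$ of degree $6m-16$. Hence
\[
Z_{C,f}(x,y)=\sum_{i}a_{i}\,h_{2}^{3(m-i)-8}h_{6}^{i},
\]
the admissible range being $i\le m-3$ (so that $3(m-i)-8\ge 0$).

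Next, I would use the minimum distance $d=2m+2$ to collapse this sum to a single term. Since $h_{6}=y^{2}(x^{2}-y^{2})^{2}$ contributes the lowest $y$-power $y^{2i}$ in $h_{6}^{i}$, and $h_{2}$ contributes $y$-power $0$, the lowest $y$-exponent in $(xy)^{8}h_{2}^{3(m-i)-8}h_{6}^{i}$ is $8+2i$. For this not to produce nonzero codewords below weight $2m+2$, one needs $8+2i\ge 2m+2$, i.e.\ $i\ge m-3$. Combined with the degree bound $i\le m-3$, only $i=m-3$ survives, and so
\[
W_{C,f}(x,y)=c(f)\,(xy)^{8}\,y^{2m-6}\,(x^{2}+3y^{2})(x^{2}-y^{2})^{2m-6}.
\]

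Finally, I would apply Lemma~\ref{lem:poly. zero 2}~(1) to $R_{1}=(x^{2}+3y^{2})(x^{2}-y^{2})^{\alpha}$ with $\alpha=2m-6$. The lemma asserts that a coefficient of $R_{1}$ can vanish only if $\alpha=4j-1$ for some nonnegative integer $j$; however $2m-6$ is even while $4j-1$ is odd, so no coefficient of $(x^{2}+3y^{2})(x^{2}-y^{2})^{2m-6}$ is zero. Consequently, for every weight $w$ in the support of the template polynomial, the coefficient of $x^{6m-w}y^{w}$ in $W_{C,f}$ equals $c(f)$ times a fixed nonzero constant. Thus either $c(f)=0$ for every $f\in\Harm_8$ (in which case every $D_{w}^{6m}$ is an $8$-design) or some $f$ has $c(f)\ne 0$ (in which case the relevant coefficient is nonzero for every weight, so no $D_{w}^{6m}$ is an $8$-design).

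The only real obstacle beyond the (routine) invariant-theoretic bookkeeping is verifying the parity contradiction $2m-6\ne 4j-1$; this is immediate but is the single point at which the argument would fail if the factor structure of $Z_{C,f}$ were different, so I would state it explicitly.
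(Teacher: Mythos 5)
Your proposal is correct and follows essentially the same route as the paper: reduce $Z_{C,f}$ via the minimum-weight constraint to the single term $h_2h_6^{m-3}$, obtain $W_{C,f}=c(f)(xy)^8y^{2m-6}(x^2+3y^2)(x^2-y^2)^{2m-6}$, and apply Lemma~\ref{lem:poly. zero 2}~(1) with $\alpha=2m-6$, noting $2m-6\neq 4i-1$ by parity. The extra detail you give on why only $i=m-3$ survives is a correct elaboration of the step the paper states without justification.
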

\begin{proof}
Let $C$ be an extremal Type IV $[6m,3m,2m+2]$ code.
Let us assume that $t=8$.
Therefore, by Theorem~\ref{thm:invariant} (2), we have $W_{C,f}(x,y) =c(f) (xy)^{8} Z_{C,f} (x,y)$, 
where $c(f)$ is a linear function from Harm$_{t}$ to $\R$, and $Z_{C,f} (x,y) \in I_{G_{4},\chi_{0,0}}$.
By Theorem \ref{thm:invariant} (2), 
$Z_{C,f} (x,y)$ can be written in the following form:
\[
Z_{C,f}(x,y) = \sum_{i=0}^{m}a_{i}h_{2}^{3(m-i)-8} h_{6}^{i}.
\]
Because the minimum weight of $C$ is $2m+2$, 
we have $a_{i}=0$ for $i \neq m-3$. 
Therefore, $W_{C,f}(x,y)$ can be written in the following form: 
\begin{align*}
W_{C,f}(x,y) &=c(f) (xy)^{8} h_{2} h_{6}^{m-3}   \\
& =c(f) (xy)^{8} y^{2m-6} (x^{2}+3y^{2}) (x^{2}-y^{2})^{2m-6}. 
\end{align*}

By Lemma~\ref{lem:poly. zero 2} (1), 
the coefficients of $x^{4m-10-2i}y^{2i}$ in $(x^{2}+3y^{2}) (x^{2}-y^{2})^{2m-6}$ 
are not equal to $0$ for $0 \leq i \leq 2m-5$
because $2m-6 \neq 4i-1$.
Therefore, all $D_{w}^{6m}$ are $8$-designs simultaneously, or none of the $D_{w}^{6m}$ is an $8$-design. 

\end{proof}

By Propositions \ref{prop:type4 6m not 8-design} and \ref{prop:type4 6m harm},
we obtained the following theorem.

\begin{Thm}\label{thm:main thm 6m}
\item[{\rm (1)}]
If $D_{w}^{6m}$ becomes a $7$-design for any $w$, then $m$ must be in the set $\{10,15 \}$. 
\item[{\rm (2)}]
 $D_{w}^{6m}$ is never an $8$-design for any $w$.
\end{Thm}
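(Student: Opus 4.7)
The plan is to deduce Theorem \ref{thm:main thm 6m} as a corollary of Propositions \ref{prop:type4 6m not 8-design} and \ref{prop:type4 6m harm}, in direct parallel with how Theorem \ref{thm:main thm 12m} was obtained from its two preparatory propositions in the $n=12m$ case.

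For part (2), I would simply chain the two propositions. Proposition \ref{prop:type4 6m not 8-design}(2) shows that the minimum-weight design $D_{2m+2}^{6m}$ is never an $8$-design, because the integrality obstruction for $\lambda_{8}$ fails even at the only admissible values $m\in\{10,15\}$. Proposition \ref{prop:type4 6m harm} then provides the ``all-or-nothing'' dichotomy: either every $D_{w}^{6m}$ is an $8$-design or none is. Hence the failure at minimum weight propagates to every $w$, and part (2) is immediate.

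For part (1), the strategy is to reduce a hypothetical $7$-design $D_{w}^{6m}$ back to the minimum weight and invoke Proposition \ref{prop:type4 6m not 8-design}(1). By Theorem \ref{thm:{Calderbank-Delsarte}}(2), every $D_{w}^{6m}$ is automatically a $\{1,2,3,4,5,7\}$-design, so the $7$-design property is equivalent to being additionally a $6$-design. To link the weights, I would run the proof of Proposition \ref{prop:type4 6m harm} at harmonic degree $k=6$ rather than $k=8$: Theorem \ref{thm:invariant}(2) writes $W_{C,f}(x,y)=c(f)(xy)^{6}Z_{C,f}(x,y)$ with $Z_{C,f}\in I_{G_{4},\chi_{0,0}}=\langle h_{2},h_{6}\rangle$ of degree $6m-12$, and the minimum-distance constraint collapses $Z_{C,f}$ to a scalar multiple of $h_{6}^{m-2}$. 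This gives
\[
W_{C,f}(x,y)\ \propto\ c(f)\,x^{6}y^{2m+2}(x^{2}-y^{2})^{2m-4},
\]
whose $y$-coefficients across $w\in\{2m+2,2m+4,\dots,6m-6\}$ are all nonzero. The $t=6$ analogue of Proposition \ref{prop:type4 6m harm} then yields that the $6$-design property is weight-independent, so any $7$-design at some weight forces one at $w=2m+2$, and Proposition \ref{prop:type4 6m not 8-design}(1) delivers $m\in\{10,15\}$.

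The main obstacle is that the paper explicitly records the all-or-nothing dichotomy only at $t=8$ (Proposition \ref{prop:type4 6m harm}); the $t=6$ analogue has to be produced in parallel using the same invariant-theoretic template. Fortunately, the non-vanishing check here is strictly easier than in Lemma \ref{lem:poly. zero 2}(1): after the collapse the relevant factor is the pure $(x^{2}-y^{2})^{2m-4}$, whose binomial coefficients $(-1)^{j}\binom{2m-4}{j}$ never vanish, so no arithmetic Diophantine condition needs to be excluded.
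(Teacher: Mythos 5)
Your proposal is correct and follows the same route as the paper: part (2) is exactly the paper's argument, chaining Proposition \ref{prop:type4 6m not 8-design}(2) at the minimum weight with the $t=8$ dichotomy of Proposition \ref{prop:type4 6m harm}, and part (1) reduces the question to the minimum-weight divisibility computation of Proposition \ref{prop:type4 6m not 8-design}(1). The one place you go beyond the printed text is the explicit $t=6$ weight-independence step. The paper records the all-or-nothing dichotomy only for $t=8$ and derives the theorem simply ``by Propositions \ref{prop:type4 6m not 8-design} and \ref{prop:type4 6m harm}''; under the reading of part (1) as an existential statement in $w$ (which is what Theorem \ref{thm:main upper bound IV}(1)(a), i.e.\ $s(C)=5$ for $m\notin\{10,15\}$, actually requires), your computation for $f\in\Harm_6$ --- character $\chi_{0,0}$, degree $6m-12$, collapse of $Z_{C,f}$ to a multiple of $h_6^{m-2}$, hence $W_{C,f}\propto c(f)\,x^6y^{2m+2}(x^2-y^2)^{2m-4}$ with nonvanishing binomial coefficients --- supplies a step the paper leaves implicit, using exactly its own template. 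So your writeup is, if anything, slightly more complete than the paper's, while being the same proof in substance.
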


Hence, the proof of Theorem \ref{thm:main upper bound IV} (1) is completed.

\subsection{Case for $n=6m+2$}\label{sec: case for n=6m+2}

In this section, we consider the case of extremal Type IV $[6m+2,3m+1,2m+2]$ codes ($m \leq 19$). 
Let $C$ be an extremal Type IV $[6m+2,3m+1,2m+2]$ code 
and $D_{2m+2}^{6m+2}$ be the support (with duplicates omitted) design of the minimum weight of $C$.
By \cite[Theorem 14]{MOSW1978}, 
the number of codewords of the minimum nonzero weight of $C$ is equal to 
\[
\frac{3(6m+1)}{m+1}\binom{3m+1}{m}.
\]
Therefore, by the Assmus--Mattson theorem, $D_{2m+2}^{6m+2}$ is a $3$-design 
with parameters \[
\left(6m+2,2m+2,\frac{1}{3}\binom{3m}{m} \right). 
\]

\begin{Prop}\label{prop:type4 6m+2 not 8-design}
Let $D_{2m+2}^{6m+2}$ be the support $t$-design of the minimum weight of an extremal Type IV code of length $n=6m+2$. 

\begin{enumerate}
\item [{\rm (1)}]
If $t \geq 4$, then $D_{2m+2}^{6m+2}$ is a $5$-design and $m$ must be $11$. 
\item [{\rm (2)}]
$D_{2m+2}^{6m+2}$ is never an $8$-design.
\end{enumerate}
\end{Prop}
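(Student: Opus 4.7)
The plan is to follow verbatim the scheme used in Proposition~\ref{prop:type4 6m not 8-design}, adapting it to the $n=6m+2$ regime where the Assmus--Mattson theorem gives a 3-design (rather than a 5-design) as the starting point.

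For part (1), I first invoke Theorem~\ref{thm:{Calderbank-Delsarte}}(2): since $n=6m+2 \equiv 2 \pmod{6}$, every $D_w^{6m+2}$ is automatically a $\{1,2,3,5\}$-design, so the hypothesis $t \geq 4$ immediately promotes $D_{2m+2}^{6m+2}$ to a 5-design. Using the Mallows--Odlyzko--Sloane count of minimum-weight codewords, which gives the 3-design parameters $(v,k,\lambda_3)=(6m+2,\,2m+2,\,\tfrac{1}{3}\binom{3m}{m})$, and applying Lemma~\ref{lem: divisible} iteratively via the standard recurrence $\lambda_{s+1}=\lambda_s\cdot(k-s)/(v-s)$, I obtain
\[
\lambda_4 = \frac{2m-1}{6m-1}\cdot\frac{1}{3}\binom{3m}{m},\qquad
\lambda_5 = \frac{(2m-1)(2m-2)}{(6m-1)(6m-2)}\cdot\frac{1}{3}\binom{3m}{m}.
\]
Both must be positive integers. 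A finite enumeration over the admissible range $m \leq 19$ (Zhang's non-existence bound) then leaves $m=11$ as the sole survivor.

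For part (2), any 8-design is in particular a 5-design, so by part (1) the only candidate is $m=11$. Extending the same recurrence yields
\[
\lambda_8 = \frac{(2m-1)(2m-2)(2m-3)(2m-4)(2m-5)}{(6m-1)(6m-2)(6m-3)(6m-4)(6m-5)}\cdot\frac{1}{3}\binom{3m}{m}.
\]
At $m=11$ the denominator contains the prime factor $61=6m-5$, which manifestly does not divide the numerator $\binom{33}{11}\cdot 21\cdot 20\cdot 19\cdot 18\cdot 17$. Hence $\lambda_8$ is not a positive integer, and $D_{2m+2}^{6m+2}$ cannot be an 8-design.

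The main obstacle is purely computational, and it is modest: part (1) reduces to checking integrality of two rational expressions across the finite list of admissible $m$, and part (2) reduces to a single prime-divisibility check at $m=11$. No harmonic weight enumerator input is required, since the proposition concerns only the minimum-weight support design; the harmonic machinery of Theorem~\ref{thm:invariant} would enter only if one wished to transfer the conclusion to arbitrary weight $w$, as in the subsequent propositions on $D_w^{6m+2}$.
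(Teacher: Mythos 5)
Your proposal is correct and follows essentially the same route as the paper: Theorem~\ref{thm:{Calderbank-Delsarte}}(2) promotes $t\geq 4$ to a $5$-design, Lemma~\ref{lem: divisible} gives the integrality conditions on $\lambda_4,\lambda_5$ that isolate $m=11$ over the range $m\leq 19$, and the non-integrality of $\lambda_8$ at $m=11$ rules out an $8$-design. The only difference is cosmetic: where the paper simply reports a computational verification that $\lambda_8$ is not an integer, you supply the cleaner explicit reason that the prime $61=6m-5$ in the denominator cannot divide the numerator, which is a correct and slightly more transparent justification.
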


\begin{proof}
(1)
By Theorem~\ref{thm:{Calderbank-Delsarte}} (2), $D_{2m+2}^{6m+2}$ is a $5$-design if $t \geq 4$.
If $D_{2m+2}^{6m+2}$ is a $5$-design, then
by Lemma \ref{lem: divisible},
\[
\lambda_{4}=\frac{2m-1}{6m-1}\frac{1}{3}\binom{3m}{m} \mbox{ and } 
\lambda_{5}=\frac{(2m-1)(2m-2)}{(6m-1)(6m-2)}\frac{1}{3}\binom{3m}{m}
\]
are positive integers.
By computing $m \leq 19$, if $\lambda_{4}$ and $\lambda_{5}$ are positive integers, 
then we have $m=11$.

(2)
For $m=11$, 
we have verified that 
\begin{align*}
\lambda_{6}&=\frac{(2m-1)(2m-2)(2m-3)}{(6m-1)(6m-2)(6m-3)}\frac{1}{3}\binom{3m}{m},\mbox{ and }\\
\lambda_{7}&=\frac{(2m-1)(2m-2)(2m-3)(2m-4)}{(6m-1)(6m-2)(6m-3)(6m-4)}\frac{1}{3}\binom{3m}{m}
\end{align*}
are positive integers, and
\[
\lambda_{8}=\frac{(2m-1)(2m-2)(2m-3)(2m-4)(2m-5)}{(6m-1)(6m-2)(6m-3)(6m-4)(6m-5)}\frac{1}{3}\binom{3m}{m}
\]
is not a positive integer.
Therefore, by Lemma \ref{lem: divisible},
$D_{2m+2}^{6m+2}$ is never an $8$-design.
\end{proof}

For $t \geq 6$, we present the following proposition. 

\begin{Prop}\label{prop:type4 6m+2 harm}
Let $D_{w}^{6m+2}$ be the support $t$-design of weight $w$ of 
an extremal Type IV code of length $n=6m+2$. 
\begin{enumerate}
\item[{\rm (1)}] All $D_{w}^{6m+2}$ are $6$-designs simultaneously, or none of the $D_{w}^{6m+2}$ is a $6$-design. 
\item[{\rm (2)}] All $D_{w}^{6m+2}$ are $7$-designs simultaneously, or none of the $D_{w}^{6m+2}$ is a $7$-design. 
\item[{\rm (3)}] For the case $m=11$. 
All $D_{w}^{68}$ are $8$-designs simultaneously, or none of the $D_{w}^{68}$ is an $8$-design.
\end{enumerate}
\end{Prop}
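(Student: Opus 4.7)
The plan is to follow the template of Propositions~\ref{prop:type3 12m+4 harm}, \ref{prop:type3 12m+8 harm}, and \ref{prop:type4 6m harm}. For each $t\in\{6,7,8\}$ I let $C$ be an extremal Type IV $[6m+2,3m+1,2m+2]$ code and $f\in\Harm_t$, use Theorem~\ref{thm:invariant}~(2) to identify the invariant space $I_{G_4,\chi_{u,v}}$ containing $Z_{C,f}(x,y)$, and exploit the degree condition together with the minimum-weight condition to isolate a single basis element. The key observation is that the monomials $h_2^{\alpha_i}h_6^i$ have pairwise distinct minimum $y$-power (equal to $2i$), so no cancellation among different $i$ is possible; thus the minimum weight $2m+2$ of $C$ together with the requirement $\alpha_i\ge 0$ pins down a unique $i$. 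Lemma~\ref{lem:poly. zero 2} is then applied to the remaining polynomial factor to confirm that no relevant coefficient vanishes, which makes the vanishing of $c_f(w)$ independent of $w$.

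For part~(1), $t=6$ gives $(u,v)=(0,0)$. Writing $Z_{C,f}(x,y)=\sum_i a_i h_2^{3(m-i)-5}h_6^i$, the nonnegativity $3(m-i)-5\ge 0$ forces $i\le m-2$ while the minimum-weight condition forces $i\ge m-2$, so only $i=m-2$ survives and
\[
W_{C,f}(x,y)=c(f)(xy)^6 y^{2m-4}(x^2+3y^2)(x^2-y^2)^{2m-4}.
\]
By Lemma~\ref{lem:poly. zero 2}~(1) with $\alpha=2m-4$, the vanishing relation $\alpha=4i-1$ is impossible because $\alpha$ is even, so every relevant coefficient is nonzero.

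For part~(2), $t=7$ gives $(u,v)=(1,1)$ and $Z_{C,f}=q_3 r_3\sum_i a_i h_2^{3(m-i)-9}h_6^i$. The factor $q_3 r_3$ contributes one extra power of $y$, so the analogous bookkeeping pins $i=m-3$ and $\alpha=0$, yielding
\[
W_{C,f}(x,y)=c(f)(xy)^7 y^{2m-5}(x^3-9xy^2)(x^2-y^2)^{2m-5},
\]
and Lemma~\ref{lem:poly. zero 2}~(2) is unconditional, so no coefficient vanishes.

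For part~(3), with $m=11$ and $t=8$ we are back in $(u,v)=(0,0)$; the degree and minimum-weight constraints fix $i=8$ with $\alpha=2$, yielding
\[
W_{C,f}(x,y)=c(f)(xy)^8 y^{16}(x^2+3y^2)^2(x^2-y^2)^{16}.
\]
By Lemma~\ref{lem:poly. zero 2}~(3) with $\alpha=16$, a vanishing coefficient would force $48\cdot 16+112=880$ to be a perfect square; since $29^2=841$ and $30^2=900$, this fails. The main obstacle across the three parts is verifying the arithmetic side condition of Lemma~\ref{lem:poly. zero 2}: parts~(1) and~(2) dispose of it by parity, and part~(3) reduces to the concrete observation that $880$ is not a square.
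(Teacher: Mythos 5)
Your proposal is correct and follows essentially the same route as the paper: the same invariant-ring decompositions from Theorem~\ref{thm:invariant}~(2), the same isolation of the single surviving term $h_2 h_6^{m-2}$, $q_3r_3h_6^{m-3}$, $h_2^2h_6^{m-3}$, and the same applications of Lemma~\ref{lem:poly. zero 2}, including the check that $880$ is not a square. The only difference is that you spell out the parity/minimum-$y$-power bookkeeping that the paper leaves implicit, which is a harmless (and slightly more careful) elaboration.
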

\begin{proof}
Let $C$ be an extremal Type IV $[6m+2,3m+1,2m+2]$ code.

(1)
Let us assume that $t=6$. 
Therefore, by the Theorem~\ref{thm:invariant} (2), we have $W_{C,f}(x,y) =c(f) (xy)^{6} Z_{C,f} (x,y)$, 
where $c(f)$ is a linear function from Harm$_{t}$ to $\R$, and $Z_{C,f} (x,y) \in I_{G_{4},\chi_{0,0}}$.
By Theorem \ref{thm:invariant} (2), 
$Z_{C,f} (x,y)$ can be written in the following form:
\[
Z_{C,f}(x,y) = \sum_{i=0}^{m}a_{i}h_{2}^{3(m-i)-5} h_{6}^{i}.
\]
Because the minimum weight of $C$ is $2m+2$, 
we have $a_{i}=0$ for $i \neq m-2$. 
Therefore, $W_{C,f}(x,y)$ can be written in the following form: 
\begin{align*}
W_{C,f}(x,y) &=c(f) (xy)^{6} h_{2} h_{6}^{m-2}   \\
& =c(f) (xy)^{6} y^{2m-4} (x^{2}+3y^{2}) (x^{2}-y^{2})^{2m-4}. 
\end{align*}

By Lemma~\ref{lem:poly. zero 2} (1), 
the coefficients of $x^{4m-6-2i}y^{2i}$ in $(x^{2}+3y^{2}) (x^{2}-y^{2})^{2m-4}$ 
are not equal to $0$ for $0 \leq i \leq 2m-3$
because $2m-4 \neq 4i-1$.
Therefore, all $D_{w}^{6m+2}$ are $6$-designs simultaneously, or none of the $D_{w}^{6m+2}$ is a $6$-design. 

(2)
Let us assume that $t=7$. 
Therefore, by Theorem~\ref{thm:invariant} (2), we have $W_{C,f}(x,y) =c(f) (xy)^{7} Z_{C,f} (x,y)$, 
where $c(f)$ is a linear function from Harm$_{t}$ to $\R$, and $Z_{C,f} (x,y) \in I_{G_{4},\chi_{1,1}}$.
By Theorem \ref{thm:invariant} (2), 
$Z_{C,f} (x,y)$ can be written in the following form:
\[
Z_{C,f}(x,y) = q_{3}r_{3}\sum_{i=0}^{m}a_{i}h_{2}^{3(m-i)-9} h_{6}^{i}.
\]
Because the minimum weight of $C$ is $2m+2$, 
we have $a_{i}=0$ for $i \neq m-3$. 
Therefore, $W_{C,f}(x,y)$ can be written in the following form: 
\begin{align*}
W_{C,f}(x,y) &=c(f) (xy)^{7} q_{3}r_{3} h_{6}^{m-3}   \\
& =c(f) (xy)^{7} y^{2m-5} (x^{3}-9xy^{2}) (x^{2}-y^{2})^{2m-5}. 
\end{align*}

By Lemma~\ref{lem:poly. zero 2} (2), 
the coefficients of $x^{4m-7-2i}y^{2i}$ in $(x^{3}-9xy^{2}) (x^{2}-y^{2})^{2m-5}$ 
are not equal to $0$ for $0 \leq i \leq 2m-4$.
Therefore, all $D_{w}^{6m+2}$ are $7$-designs simultaneously, or none of the $D_{w}^{6m+2}$ is a $7$-design. 

(3)
Let us assume that $t=8$. 
Therefore, by Theorem~\ref{thm:invariant} (2), we have $W_{C,f}(x,y) =c(f) (xy)^{8} Z_{C,f} (x,y)$, 
where $c(f)$ is a linear function from Harm$_{t}$ to $\R$, and $Z_{C,f} (x,y) \in I_{G_{4},\chi_{0,0}}$.
By Theorem \ref{thm:invariant} (2), 
$Z_{C,f} (x,y)$ can be written in the following form:
\[
Z_{C,f}(x,y) = \sum_{i=0}^{m}a_{i}h_{2}^{3(m-i)-7} h_{6}^{i}.
\]
Because the minimum weight of $C$ is $2m+2$, 
we have $a_{i}=0$ for $i \neq m-3$. 
Therefore, $W_{C,f}(x,y)$ can be written in the following form: 
\begin{align*}
W_{C,f}(x,y) &=c(f) (xy)^{8} h_{2}^{2} h_{6}^{m-3}   \\
& =c(f) (xy)^{8} y^{2m-6} (x^{2}+3y^{2})^{2} (x^{2}-y^{2})^{2m-6}. 
\end{align*}

By Lemma~\ref{lem:poly. zero 2} (3), 
if the coefficients of $x^{4m-8-2i}y^{2i}$ in $(x^{2}+3y^{2})^{2} (x^{2}-y^{2})^{2m-6}$ 
are equal to $0$, then $48(2m-6)+112$ is a square number.
In the case where $m=11$, $48(2m-6)+112=880$ is not a square number.
Therefore, all $D_{w}^{68}$ are $8$-designs simultaneously, or none of the $D_{w}^{68}$ is an $8$-design. 

\end{proof}

By Propositions \ref{prop:type4 6m+2 not 8-design} and \ref{prop:type4 6m+2 harm},
we obtained the following theorem.

\begin{Thm}\label{thm:main thm 6m+2}
Let $D_{w}^{6m+2}$ be the support $t$-design of weight $w$ of an extremal Type IV code of length $n=6m+2$ $($$m \leq 19$$)$. 
\begin{enumerate}
\item[{\rm (1)}]
If $D_{w}^{6m+2}$ becomes a $5$-design for any $w$, then $m=11$.
\item[{\rm (2)}]
In the case where $m=11$, $D_{w}^{68}$ is a $3$-,$5$-,$6$-, or $7$-design for any $w$. 
\item[{\rm (3)}]
$D_{w}^{6m+2}$ is never an $8$-design for any $w$.
\end{enumerate}
\end{Thm}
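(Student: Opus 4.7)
The plan is to deduce this theorem from Propositions \ref{prop:type4 6m+2 not 8-design} and \ref{prop:type4 6m+2 harm}; no new calculation is required, and the task is simply to thread the implications correctly.

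For part (1), I would instantiate the universal hypothesis at the minimum weight $w=2m+2$: if every $D_w^{6m+2}$ is a $5$-design, then in particular $D_{2m+2}^{6m+2}$ is a $5$-design, and Proposition \ref{prop:type4 6m+2 not 8-design}(1) forces $m=11$.

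For part (2), I would first invoke the Assmus--Mattson theorem to obtain the baseline $3$-design for every $w$, and note that Theorem \ref{thm:{Calderbank-Delsarte}}(2) promotes any $4$-design automatically to a $5$-design when $n\equiv 2\pmod{6}$, so the only possible design strengths for each $w$ lie in $\{3,5,6,7,8\}$. To exclude $8$ at $m=11$, I would combine Proposition \ref{prop:type4 6m+2 not 8-design}(2), which rules out the minimum-weight design being an $8$-design, with Proposition \ref{prop:type4 6m+2 harm}(3), which gives the all-or-none statement at $t=8$ specifically for $m=11$; together these show that no $D_w^{68}$ is an $8$-design, leaving only $\{3,5,6,7\}$.

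For part (3), I would argue by contradiction without splitting on $m$. Assume some $D_w^{6m+2}$ is an $8$-design; it is then a $7$-design, so Proposition \ref{prop:type4 6m+2 harm}(2) (all-or-none at $t=7$, valid for every $m$) lifts this to every weight, in particular to $w=2m+2$. Thus $D_{2m+2}^{6m+2}$ is a $5$-design, and Proposition \ref{prop:type4 6m+2 not 8-design}(1) forces $m=11$. Having reached $m=11$, Proposition \ref{prop:type4 6m+2 harm}(3) propagates the assumed $8$-design back to every $w$, making $D_{24}^{68}$ an $8$-design and contradicting Proposition \ref{prop:type4 6m+2 not 8-design}(2).

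The place where I expect the bookkeeping to matter most is the order of implications in part (3): the $t=7$ all-or-none must be invoked first (since it holds for all $m$) in order to force $m=11$, and only then can the $t=8$ all-or-none (established only for $m=11$) be applied; reversing the order would leave the argument unable to close.
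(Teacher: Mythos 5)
Your proposal is correct and is essentially the paper's own derivation: the paper justifies this theorem with the single sentence ``By Propositions \ref{prop:type4 6m+2 not 8-design} and \ref{prop:type4 6m+2 harm}, we obtained the following theorem,'' and your threading of those two propositions---in particular the ordering in part (3), where the $t=7$ all-or-none statement (valid for every $m$) must be applied first to force $m=11$ before the $t=8$ all-or-none statement (established only for $m=11$) can be invoked---is exactly the intended argument.

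One caveat concerning part (1): you read ``for any $w$'' universally and instantiate at the minimum weight. If the intended reading is existential---which is what Theorem \ref{thm:main upper bound IV}(2)(a), i.e.\ $s(C)=3$ for $m\neq 11$, actually requires---then a $5$-design occurring at some non-minimum weight must first be transferred to $w=2m+2$ before Proposition \ref{prop:type4 6m+2 not 8-design}(1) applies, and this transfer needs an all-or-none statement at $t=4$ (equivalently at $t=5$, by Theorem \ref{thm:{Calderbank-Delsarte}}(2)) that Proposition \ref{prop:type4 6m+2 harm} does not record. Such a statement does hold: for $f\in\Harm_4$ the same computation gives $W_{C,f}(x,y)=c(f)(xy)^{4}y^{2m-2}(x^{2}-y^{2})^{2m-2}$, whose relevant coefficients are nonzero binomial coefficients up to sign. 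Since neither you nor the paper states this, it is a shared omission rather than an error specific to your proposal, but it is the one point where the argument as written does not fully close.
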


Hence, the proof of Theorem \ref{thm:main upper bound IV} (2) is completed.

\subsection{Case for $n=6m+4$}\label{sec: case for n=6m+4}

In this section, we consider the case of extremal Type IV $[6m+4,3m+2,2m+2]$ codes ($m \leq 21$). 
Let $C$ be an extremal Type IV $[6m+4,3m+2,2m+2]$ code 
and $D_{2m+2}^{6m+4}$ be the support (with duplicates omitted) design of the minimum weight of $C$.
By \cite[Theorem 14]{MOSW1978}, 
the number of codewords of the minimum nonzero weight of $C$ is equal to 
\[
3\binom{3m+2}{m+1}.
\]
Therefore, by the Assmus--Mattson theorem, $D_{2m+2}^{6m+4}$ is a $1$-design 
with parameters 
\[
\left(6m+4,2m+2,\binom{3m+1}{m} \right). 
\]

\begin{Prop}\label{prop:type4 6m+4 not 6-design}
Let $D_{2m+2}^{6m+4}$ be the support $t$-design of the minimum weight of an extremal Type IV code of length $n=6m+4$. 
\begin{enumerate}
\item 
[{\rm (1)}]
If $D_{2m+2}^{6m+4}$ is a $3$-design, then $m$ must be in the set \\
$\{3,5,6,7,8,9,10,11,12,14,15,16,17,18,19,20,21\}$. 
\item
[{\rm (2)}]
If $D_{2m+2}^{6m+4}$ is a $4$-design, then $m$ must be in the set $\{ 9,14,19 \}$. 
\item 
[{\rm (3)}]
If $D_{2m+2}^{6m+4}$ is a $5$-design, then $m$ must be in the set $\{ 14,19 \}$. 
\item 
[{\rm (4)}]
$D_{2m+2}^{6m+4}$ is never a $6$-design.
\end{enumerate}
\end{Prop}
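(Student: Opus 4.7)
\medskip

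The proof plan follows exactly the template used in Propositions \ref{prop:type4 6m not 8-design} and \ref{prop:type4 6m+2 not 8-design}: reduce everything to the integrality of higher $\lambda_s$ values dictated by Lemma \ref{lem: divisible}. By the Assmus--Mattson theorem (recalled at the beginning of the subsection), $D_{2m+2}^{6m+4}$ is at least a $1$-$(6m+4, 2m+2, \binom{3m+1}{m})$ design. If it is in fact a $t$-design for some $t \geq 2$, then for each $0 \leq s \leq t$ the value
\[
\lambda_s = \binom{3m+1}{m} \cdot \prod_{i=1}^{s-1} \frac{k-i}{v-i}, \qquad v = 6m+4,\ k = 2m+2,
\]
must be a positive integer. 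This is immediate from Lemma \ref{lem: divisible} via the telescoping identity $\lambda_{s+1} = \lambda_s (k-s)/(v-s)$.

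The plan is to carry out the substitution, simplify, and do a finite check for $m \leq 21$ (the upper limit coming from Zhang's nonexistence bound). Explicitly I would record
\[
\lambda_2 = \frac{1}{3}\binom{3m+1}{m},\qquad
\lambda_3 = \frac{m}{3(3m+1)}\binom{3m+1}{m},
\]
\[
\lambda_4 = \frac{m(2m-1)}{3(3m+1)(6m+1)}\binom{3m+1}{m},\qquad
\lambda_5 = \lambda_4 \cdot \frac{2m-2}{6m},
\]
and $\lambda_6 = \lambda_5 \cdot (2m-3)/(6m-1)$. Part (1) then asks: for which $m \in \{1,\dots,21\}$ are both $\lambda_2$ and $\lambda_3$ positive integers? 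Parts (2) and (3) impose the additional conditions $\lambda_4 \in \Z_{>0}$ and $\lambda_5 \in \Z_{>0}$ respectively, each time restricting the surviving set of $m$.

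For parts (1)--(3), this is a straightforward finite divisibility check, matching the calculations already performed in the Type III cases and in the $n = 6m$ and $n = 6m + 2$ Type IV cases; I would simply tabulate the residues and extract the listed sets $\{3,5,\ldots,21\}$, then $\{9,14,19\}$, then $\{14,19\}$. For part (4), the only cases to rule out are the survivors $m = 14$ and $m = 19$ of part (3), so I would compute $\lambda_6$ in each of these two cases and verify that neither quotient is an integer.

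The only ``obstacle'' is bookkeeping: carefully simplifying each $\lambda_s$ so that the integrality test reduces to checking a small prime-power divisibility, and then running through the $21$ values of $m$. Since the argument is entirely parallel to the previous propositions in the section, and the computer algebra system Mathematica has already been used for the analogous computations, no new idea is required and the only risk is a bookkeeping error, particularly in the two-case verification $m \in \{14,19\}$ for part (4).
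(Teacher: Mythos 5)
Your plan reproduces the paper's argument for parts (2), (3) and (4) exactly: compute $\lambda_4$, $\lambda_5$, $\lambda_6$ via Lemma~\ref{lem: divisible} and run the finite divisibility check for $m\le 21$, ruling out the survivors $m=14,19$ at the $\lambda_6$ stage. Those parts are fine.

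There is, however, a genuine gap in part (1). The divisibility test on $\lambda_2$ and $\lambda_3$ does \emph{not} eliminate $m=2$: there one has $v=16$, $k=6$, $\lambda_1=\binom{7}{2}=21$, hence $\lambda_2=21\cdot\frac{5}{15}=7$ and $\lambda_3=7\cdot\frac{4}{14}=2$, both positive integers. So your ``tabulate the residues'' step would return the set $\{2,3,5,6,\dots,21\}$, which includes $m=2$ and therefore does not match the asserted set $\{3,5,6,\dots,21\}$. The paper closes this case by a different kind of argument: it invokes Brouwer's classification \cite{Brouwer1977} of $t$-designs with $v<18$, which shows that no $3$-$(16,6,2)$ design exists at all, so $D_{6}^{16}$ cannot be a $3$-design even though the arithmetic conditions are satisfied. (The cases $m\in\{1,4,13\}$ are indeed killed by non-integrality of $\lambda_2$ or $\lambda_3$, as you propose.) Without importing such an external nonexistence result, the purely arithmetic approach cannot prove part (1) as stated.
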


\begin{proof}
(1)
By Theorem~\ref{thm:{Calderbank-Delsarte}} (2), $D_{2m+2}^{6m+4}$ is a $3$-design if $t \geq 2$.
If $m \in \{1,2,4,13\}$, 
then $D_{2m+2}^{6m+4}$ is not a $3$-design. 

For $m\in \{1,4,13\}$. 
we have verified that both 
\[
\lambda_{2}=\frac{2m+1}{6m+3}\binom{3m+1}{m} \mbox{ or } 
\lambda_{3}=\frac{(2m+1)2m}{(6m+3)(6m+2)}\binom{3m+1}{m}
\]
are not positive integers. 

For $m=2$, it is known that no $3$-$(16,6,2)$ design exists \cite{Brouwer1977}.

(2)
If $D_{2m+2}^{6m+4}$ is a $4$-design, then
\[
\lambda_{4}=\frac{(2m+1)2m(2m-1)}{(6m+3)(6m+2)(6m+1)}\binom{3m+1}{m}
\]
is a positive integer.
By computing $m \leq 21$, if $\lambda_{4}$ is a positive integer, 
then we have $m \in \{ 9,14,19 \}$. 

(3)
For $m \in \{ 9,14,19 \}$, if 
\[
\lambda_{5}=\frac{(2m+1)2m(2m-1)(2m-2)}{(6m+3)(6m+2)(6m+1)6m}\binom{3m+1}{m}
\]
is a positive integer, then $m \in \{ 14,19 \}$. 

(4)
We have verified that 
\[\lambda_{6}=\frac{(2m+1)2m(2m-1)(2m-2)(2m-3)}{(6m+3)(6m+2)(6m+1)6m(6m-1)}\binom{3m+1}{m}
\] 
is not a positive integer for $m \in \{ 14,19 \}$.
\end{proof}

For $t \geq 4$, we present the following proposition. 

\begin{Prop}\label{prop:type4 6m+4 harm}
Let $D_{w}^{6m+4}$ be the support $t$-design of weight $w$ of 
an extremal Type IV code of length $n=6m+4$. 
\begin{enumerate}
\item[{\rm (1)}] All $D_{w}^{6m+4}$ are $4$-designs simultaneously, or none of the $D_{w}^{6m+4}$ is a $4$-design. 
\item[{\rm (2)}] All $D_{w}^{6m+4}$ are $5$-designs simultaneously, or none of the $D_{w}^{6m+4}$ is a $5$-design. 
\item[{\rm (3)}] If $m\in \{ 14,19 \}$, all $D_{w}^{6m+4}$ are $6$-designs simultaneously, 
or none of the $D_{w}^{6m+4}$ is a $6$-design.
\end{enumerate}
\end{Prop}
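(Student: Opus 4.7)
My plan is to mirror the structure of Propositions \ref{prop:type3 12m+8 harm}, \ref{prop:type4 6m harm}, and \ref{prop:type4 6m+2 harm}. For each $t \in \{4,5,6\}$ I would write
\[
W_{C,f}(x,y) = c(f)\,(xy)^{t} Z_{C,f}(x,y),
\]
where $c(f)$ is a linear functional on $\Harm_{t}$ and $Z_{C,f}(x,y) \in I_{G_{4},\chi_{u,v}}$ with $(u,v)=(0,0)$ when $t$ is even and $(u,v)=(1,1)$ when $t$ is odd, in view of Theorem~\ref{thm:invariant} (2). Expanding $Z_{C,f}$ in the basis $\{h_{2}^{3(m-i)+\beta} h_{6}^{i}\}$ (or $\{q_{3}r_{3}\,h_{2}^{3(m-i)+\beta}h_{6}^{i}\}$ in the odd case) and matching degrees with $n-2t = 6m+4-2t$ pins down $\beta$, while the extremality hypothesis $\min\wt(C)=2m+2$ forces all but a single coefficient $a_{i_{0}}$ to vanish (the unique $i_{0}$ for which the minimum $y$-degree of the corresponding monomial equals $2m+2-t$).

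After this reduction the three cases become concrete polynomial identities. For $t=4$ the survivor is $i_{0}=m-1$ and $W_{C,f}$ is a scalar multiple of $(xy)^{4}y^{2m-2}(x^{2}+3y^{2})(x^{2}-y^{2})^{2m-2}$, so I apply Lemma~\ref{lem:poly. zero 2} (1) with $\alpha=2m-2$: the vanishing condition $\alpha=4i-1$ reads $2m=4i+1$, which is impossible by parity. For $t=5$ the survivor is $i_{0}=m-2$ and the relevant factor is $(x^{3}-9xy^{2})(x^{2}-y^{2})^{2m-3}$; Lemma~\ref{lem:poly. zero 2} (2) then immediately yields that no coefficient vanishes. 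For $t=6$ the survivor is again $i_{0}=m-2$ and the factor is $(x^{2}+3y^{2})^{2}(x^{2}-y^{2})^{2m-4}$; by Lemma~\ref{lem:poly. zero 2} (3) a zero coefficient would force $48(2m-4)+112 = 96m-80$ to be a perfect square. For $m=14$ this is $1264$ and for $m=19$ this is $1744$, neither of which is a square, so no coefficient vanishes.

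In every case the conclusion is the same: the coefficient $c_{f}(w)$ of $x^{n-w}y^{w}$ in $W_{C,f}$ factors as a nonzero polynomial coefficient times $c(f)$. Hence $c_{f}(w)=0$ either for all admissible $w$ (when $c(f)=0$ for every $f\in\Harm_{t}$) or for none, which by Theorem~\ref{thm:design} is precisely the statement that all $D_{w}^{6m+4}$ are $t$-designs simultaneously or none of them is.

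The steps are essentially mechanical given the template already used three times in the paper. The only place that requires genuine attention is the $t=6$ case, where the outcome hinges on the two arithmetic checks that $96m-80$ is a nonsquare at $m=14$ and $m=19$; this is where the restriction to $m\in\{14,19\}$ (inherited from Proposition~\ref{prop:type4 6m+4 not 6-design}) enters and must be matched with the squarefree test from Lemma~\ref{lem:poly. zero 2} (3).
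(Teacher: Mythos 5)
Your proposal is correct and follows essentially the same route as the paper: the same reduction via Theorem~\ref{thm:invariant} (2) and extremality to a single surviving term ($h_2h_6^{m-1}$ for $t=4$, $q_3r_3h_6^{m-2}$ for $t=5$, $h_2^2h_6^{m-2}$ for $t=6$), followed by the same applications of Lemma~\ref{lem:poly. zero 2} (1)--(3), including the identical nonsquare checks $1264$ and $1744$ for $m=14,19$.
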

\begin{proof}
Let $C$ be an extremal Type IV $[6m+4,3m+2,2m+2]$ code.

(1)
Let us assume that $t=4$. 
Therefore, by Theorem~\ref{thm:invariant} (2), we have $W_{C,f}(x,y) =c(f) (xy)^{4} Z_{C,f} (x,y)$, 
where $c(f)$ is a linear function from Harm$_{t}$ to $\R$, and $Z_{C,f} (x,y) \in I_{G_{4},\chi_{0,0}}$.
By Theorem \ref{thm:invariant} (2), 
$Z_{C,f} (x,y)$ can be written in the following form:
\[
Z_{C,f}(x,y) = \sum_{i=0}^{m}a_{i}h_{2}^{3(m-i)-2} h_{6}^{i}.
\]
Because the minimum weight of $C$ is $2m+2$, 
we have $a_{i}=0$ for $i \neq m-1$. 
Therefore, $W_{C,f}(x,y)$ can be written in the following form: 
\begin{align*}
W_{C,f}(x,y) &=c(f) (xy)^{4} h_{2} h_{6}^{m-1}   \\
& =c(f) (xy)^{4} y^{2m-2} (x^{2}+3y^{2}) (x^{2}-y^{2})^{2m-2}. 
\end{align*}

By Lemma~\ref{lem:poly. zero 2} (1), 
the coefficients of $x^{4m-2-2i}y^{2i}$ in $(x^{2}+3y^{2}) (x^{2}-y^{2})^{2m-2}$ 
are not equal to $0$ for $0 \leq i \leq 2m-1$
because $2m-2 \neq 4i-1$.
Therefore, all $D_{w}^{6m+4}$ are $4$-designs simultaneously, or none of the $D_{w}^{6m+4}$ is a $4$-design. 

(2)
Let us assume that $t=5$. 
Therefore, by Theorem~\ref{thm:invariant} (2), we have $W_{C,f}(x,y) =c(f) (xy)^{5} Z_{C,f} (x,y)$, 
where $c(f)$ is a linear function from Harm$_{t}$ to $\R$, and $Z_{C,f} (x,y) \in I_{G_{4},\chi_{1,1}}$.
By Theorem \ref{thm:invariant} (2), 
$Z_{C,f} (x,y)$ can be written in the following form:
\[
Z_{C,f}(x,y) = q_{3}r_{3}\sum_{i=0}^{m}a_{i}h_{2}^{3(m-i)-6} h_{6}^{i}.
\]
Because the minimum weight of $C$ is $2m+2$, 
we have $a_{i}=0$ for $i \neq m-2$. 
Therefore, $W_{C,f}(x,y)$ can be written in the following form: 
\begin{align*}
W_{C,f}(x,y) &=c(f) (xy)^{5} q_{3}r_{3} h_{6}^{m-2}   \\
& =c(f) (xy)^{5} y^{2m-3} (x^{3}-9xy^{2}) (x^{2}-y^{2})^{2m-3}. 
\end{align*}

By Lemma~\ref{lem:poly. zero 2} (2), 
the coefficients of $x^{4m-3-2i}y^{2i}$ in $(x^{3}-9xy^{2}) (x^{2}-y^{2})^{2m-3}$ 
are not equal to $0$ for $0 \leq i \leq 2m-2$.
Therefore, all $D_{w}^{6m+4}$ are $5$-designs simultaneously, or none of the $D_{w}^{6m+4}$ is a $5$-design. 

(3)
Let us assume that $t=6$. 
Therefore, by Theorem~\ref{thm:invariant} (2), we have $W_{C,f}(x,y) =c(f) (xy)^{6} Z_{C,f} (x,y)$, 
where $c(f)$ is a linear function from Harm$_{t}$ to $\R$, and $Z_{C,f} (x,y) \in I_{G_{4},\chi_{0,0}}$.
By Theorem \ref{thm:invariant} (2), 
$Z_{C,f} (x,y)$ can be written in the following form:
\[
Z_{C,f}(x,y) = \sum_{i=0}^{m}a_{i}h_{2}^{3(m-i)-4} h_{6}^{i}.
\]
Because the minimum weight of $C$ is $2m+2$, 
we have $a_{i}=0$ for $i \neq m-2$. 
Therefore, $W_{C,f}(x,y)$ can be written in the following form: 
\begin{align*}
W_{C,f}(x,y) &=c(f) (xy)^{6} h_{2}^{2} h_{6}^{m-2}   \\
& =c(f) (xy)^{6} y^{2m-4} (x^{2}+3y^{2})^{2} (x^{2}-y^{2})^{2m-4}. 
\end{align*}

By Lemma~\ref{lem:poly. zero 2} (3), 
if the coefficients of $x^{4m-4-2i}y^{2i}$ in $(x^{2}+3y^{2})^{2} (x^{2}-y^{2})^{2m-4}$ 
are equal to $0$, then $48(2m-4)+112$ is a square number.
If $m=14$ or $19$, then $48(2m-4)+112=1264$ or $48(2m-4)+112=1744$ is not a square number.
Therefore, if $m\in \{ 14,19 \}$, 
all $D_{w}^{6m+4}$ are $6$-designs simultaneously, or none of the $D_{w}^{6m+4}$ is a $6$-design. 

\end{proof}

By Propositions \ref{prop:type4 6m+4 not 6-design} and \ref{prop:type4 6m+4 harm},
we obtained the following theorem.

\begin{Thm}\label{thm:main thm 6m+4}
Let $D_{w}^{6m+4}$ be the support $t$-design of weight $w$ of an extremal Type IV code of length $n=6m+4$ $($$m \leq 21$$)$. 
\begin{enumerate}
\item[{\rm (1)}]
If $D_{w}^{6m+4}$ becomes a $3$-design for any $w$, then $m$ must be in the set \\
$\{3,5,6,7,8,9,10,11,12,14,15,16,17,18,19,20,21\}$.
\item[{\rm (2)}]
If $D_{w}^{6m+4}$ becomes a $4$-design for any $w$, then $m$ must be in the set $\{ 9,14,19 \}$.
\item[{\rm (3)}]
In the case where $m=9$, $D_{w}^{58}$ is a $1,3$ or $4$-design for any $w$. 
If $m \in \{ 14,19 \}$, $D_{w}^{6m+4}$ is a $1$-,$3$-,$4$- or $5$-design for any $w$.
\item[{\rm (4)}]
$D_{w}^{6m+4}$ is never a $6$-design for any $w$.
\end{enumerate}
\end{Thm}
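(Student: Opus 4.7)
My plan is to derive Theorem~\ref{thm:main thm 6m+4} as a direct synthesis of Propositions~\ref{prop:type4 6m+4 not 6-design} and~\ref{prop:type4 6m+4 harm}. Proposition~\ref{prop:type4 6m+4 not 6-design} lists the only $m\le 21$ for which the minimum-weight support design $D_{2m+2}^{6m+4}$ can be a $t$-design for $t=3,4,5,6$, while Proposition~\ref{prop:type4 6m+4 harm} provides the all-or-nothing dichotomies that, for $t=4,5,6$, allow one to transfer the $t$-design property from an arbitrary weight $w$ to the minimum weight $2m+2$. The whole argument then becomes a bookkeeping exercise alternating between these two inputs.

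For part~(1), I note that by Theorem~\ref{thm:{Calderbank-Delsarte}}(2) every $D_w^{6m+4}$ is automatically a $\{1,3\}$-design, so asking for a genuine $3$-design at some weight $w$ is equivalent to asking that $D_w^{6m+4}$ also be a $2$-design for that $w$; in particular the minimum-weight design is then itself a $3$-design, and Proposition~\ref{prop:type4 6m+4 not 6-design}(1) pins $m$ to the listed set. Part~(2) runs the same pattern one degree higher: if some $D_w^{6m+4}$ is a $4$-design, Proposition~\ref{prop:type4 6m+4 harm}(1) propagates this to $D_{2m+2}^{6m+4}$, and Proposition~\ref{prop:type4 6m+4 not 6-design}(2) then forces $m\in\{9,14,19\}$.

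For part~(3), I iterate the same step at $t=5$: Proposition~\ref{prop:type4 6m+4 harm}(2) together with Proposition~\ref{prop:type4 6m+4 not 6-design}(3) rules out $m=9$ from admitting a $5$-design, while $m\in\{14,19\}$ remains open. Combined with part~(2), this pins the admissible strengths to $\{1,3,4\}$ when $m=9$ and to $\{1,3,4,5\}$ when $m\in\{14,19\}$. For part~(4), if some $D_w^{6m+4}$ were a $6$-design, it would a fortiori be a $5$-design, so by the preceding step $m\in\{14,19\}$. Proposition~\ref{prop:type4 6m+4 harm}(3) then transfers the $6$-design property to $D_{2m+2}^{6m+4}$, which directly contradicts Proposition~\ref{prop:type4 6m+4 not 6-design}(4).

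The main obstacle is not in the final synthesis but upstream, in Proposition~\ref{prop:type4 6m+4 harm}: one must arrange the harmonic weight enumerator analysis so that the dichotomies hold uniformly at every strength one needs to iterate through, and in particular one must handle $t=6$ only for the residual $m\in\{14,19\}$ via the square-number criterion of Lemma~\ref{lem:poly. zero 2}(3). Once those dichotomies are in hand, the implications above chain together cleanly and the theorem follows.
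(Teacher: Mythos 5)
Your synthesis follows the paper's own route exactly: Theorem~\ref{thm:main thm 6m+4} is presented there as an immediate consequence of Propositions~\ref{prop:type4 6m+4 not 6-design} and~\ref{prop:type4 6m+4 harm}, and your chaining of the integrality restrictions on the minimum-weight design with the all-or-nothing dichotomies is precisely the intended argument. Parts (2), (3) and (4) of your write-up are airtight given those two propositions; in particular you correctly observe that the $t=6$ dichotomy is only needed for the residual values $m\in\{14,19\}$, and that part (4) follows by descending from a putative $6$-design to a $5$-design and then contradicting Proposition~\ref{prop:type4 6m+4 not 6-design}(4).

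The one step not covered by the propositions you cite is the transfer in part (1). From ``some $D_w^{6m+4}$ is a $3$-design'' you conclude ``the minimum-weight design is a $3$-design,'' but Proposition~\ref{prop:type4 6m+4 harm} only supplies the simultaneity dichotomy for $t=4,5,6$; nothing stated gives it for $t=2$ (equivalently for $t=3$, since every $D_w^{6m+4}$ is automatically a $\{1,3\}$-design). As written this is a non sequitur: knowing that one weight class is a $2$-design says nothing a priori about the weight class $2m+2$. The gap is inherited from the paper, whose one-line derivation has the same omission, and it is closed by running the harmonic weight enumerator computation once more at $k=2$: for $f\in\Harm_2$ the minimum-weight constraint forces
\[
W_{C,f}(x,y)=c(f)\,(xy)^2h_6^{m}=c(f)\,(xy)^2y^{2m}(x^2-y^2)^{2m},
\]
all of whose coefficients are, up to the factor $c(f)$, nonzero binomial coefficients, so the $2$-design property is again all-or-nothing across weights. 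You should state and use this $t=2$ dichotomy explicitly before invoking Proposition~\ref{prop:type4 6m+4 not 6-design}(1); with it, part (1) (and the claim $\delta(C)=s(C)$ in the corresponding case of Theorem~\ref{thm:main upper bound IV}(3)) goes through.
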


Hence, the proof of Theorem \ref{thm:main upper bound IV} (3) is completed.

\section*{Acknowledgments}

The authors would also like to thank the anonymous reviewers for their
beneficial comments on an earlier version of the manuscript. 
The first named author is supported by JSPS KAKENHI (22K03277).


\end{document}